\newtheorem{theorem}{Theorem}[section]
\newtheorem{lemma}[theorem]{Lemma}
\newtheorem{conjecture}[theorem]{Conjecture}
\theoremstyle{remark}
\theoremstyle{remark}
\theoremstyle{definition}
\theoremstyle{remark}
\theoremstyle{remark}
\theoremstyle{remark}
\renewcommand{\int}{{\mathrm{int}}}
\newcommand{\pol}{{\mathrm{pol}}}
\newcommand{\Sing}{{\mathrm{Sing\hspace{2pt}}}}
\renewcommand{\j}{{\mathrm{j}}}
\newcommand{\grad}{\mathop{\mathrm{grad}}\nolimits}
\newcommand{\ord}{\mathop{\mathrm{ord}}\nolimits}
\newcommand{\rk}{\mathop{\mathrm{rank}}\nolimits}
\newcommand{\corank}{\mathop{\mathrm{corank}}\nolimits}
\newcommand{\m}{\setminus}
\newcommand{\bC}{{\mathbb C}}
\newcommand{\bP}{{\mathbb P}}
\newcommand{\bN}{{\mathbb N}}
\begin{document}

\title[Polar degree   and Huh's conjectures]
 {On Huh's conjectures for the polar degree}

\author{\sc Dirk Siersma}  

\address{Institute of Mathematics, Utrecht University, PO
Box 80010, \ 3508 TA Utrecht, The Netherlands.}

\email{D.Siersma@uu.nl}

\author{Joseph Steenbrink}
\address{IMAPP, Radboud University Nijmegen, Nijmegen, The Netherlands}

\email{j.steenbrink@math.ru.nl}

\author{\sc Mihai Tib\u ar }

\address{Univ. Lille, CNRS, UMR 8524 - Laboratoire Paul Painlev\'e, F-59000 Lille, France.}

\email{mihai-marius.tibar@univ-lille.fr}

\thanks{DS and MT express their gratitude to the Mathematisches Forschungsinstitut Obewolfach for supporting this research project through the Research in Pairs program 2017, and  acknowledge the support of the Labex CEMPI (ANR-11-LABX-0007-01). }

\subjclass[2000]{32S30, 58K60, 55R55, 32S50}

\keywords{polar degree, Milnor number, deformation, spectrum}


\dedicatory{}



\begin{abstract}
 We  prove a precise version of  a general conjecture on the polar degree stated by June Huh.
We confirm Huh's conjectural list of all projective hypersurfaces with  isolated singularities and polar degree equal to 2.
\end{abstract}

\maketitle

\setcounter{section}{0}

\section{Introduction}\label{s:intro}


Let $f \in \bC[x_0,\ldots,x_n]$, $n\ge 2$,  be a homogeneous polynomial of degree $d \geq 2$ and $V := \{f=0\}\subset \mathbb{P}^n$.
The \emph{polar degree} of $f$ is defined as the topological degree of the gradient mapping
\begin{equation}\label{eq:grad}
\grad f : \bP^{n}\m \Sing V \to  \bP^{n}.
\end{equation}
It only depends on the reduced structure of $V$ and thus can be denoted by $\pol(V)$. 
 This fact has been conjectured by Dolgachev \cite{Do} and proved by Dimca and Papadima \cite{DP} as a consequence of the following interpretation:
\begin{equation}\label{eq:pollink}
 \pol(V) = \rk H_{n-1}(V \m H)
\end{equation}
where $H\subset \bP^{n}$ is a general hyperplane.
More than 160 years ago, Otto Hesse had claimed \cite{Hes51, Hes59} that $\pol(V)=0$ if and only if the hypersurface $V$ is a cone\footnote{Gordan and Noether  \cite{GN76} showed that this claim is true modulo a birational transformation.}, but this has been
disproved by a nice example due to Gordan and Noether \cite{GN76}, see \cite{Huh} for a discussion of this example and other remarks.  More recently
Dolgachev \cite{Do} considered  hypersurfaces with $\pol(V) =1$ (so-called \emph{homaloidal}) and classified such plane curves, cf the list below, see also \cite{Di2, FM}.  We refer to \cite{Huh} for some recent papers about various questions around the polar degree. 

In this note we focus on the polar degree of hypersurfaces with isolated singularities. The formula:
\begin{equation}\label{eq:polmu}
  \pol(V) = (d-1)^{n} - \sum_{p\in \Sing V} \mu(V, p)
\end{equation}
proved by Dimca and Papadima \cite{DP} in terms of the degree $d$ and the Milnor numbers of $V$ at its singular points, implies in particular that the quadratic hypersurface  is the only smooth $V$ which is homaloidal. In order to bound $\pol(V)$ from below,  Huh \cite{Huh} used the theory of slicing by \emph{pencils with isolated singularities in the axis} introduced and developed in \cite{Ti-book}. He found  a key inequality in a more general setting relating $\pol (V)$ to the sectional Milnor number $\mu^{\left< n-2\right> }(V,p)$ of  any chosen singular point $p\in \Sing V$ of a hypersurface $V$ with isolated singularities and\footnote{it is explicitly stated in \emph{loc.cit.} that the inequality does not apply to $V$ which is a cone with apex at $p$; indeed, being a cone implies $\pol(V)=0$.} $\pol(V) >0$, that we shall call \emph{Huh's inequality}: 
\begin{equation}\label{eq:pol}
\pol(V) \ge  \mu^{\left< n-2\right> }(V, p) .
\end{equation}
With this bound at hand, Huh could prove   Dimca and Papadima's \cite{DP} conjectural list of all homaloidal hypersurfaces with isolated singularities \cite[Theorem 4]{Huh}:

{\it A projective hypersurface $V\subset \bP^{n}$ with only isolated singularities has polar degree 1 if and only if 
it is one of the following, after a linear change of homogeneous coordinates:}
\begin{enumerate}
\rm\item[(i)]{\it \hspace{1cm} $(n\ge 2, d=2)$  a smooth quadric:
\[  f = x_{0}^{2} + \cdots + x_{n}^{2} = 0.
\]
}
\rm\item[(ii)]{\it \hspace{1cm} $(n= 2, d=3)$ the union of three non-concurrent lines:
\[  f = x_{0}x_{1}x_{2} = 0, \hspace{.6cm} (3A_{1}).
\]
}
\rm\item[(iii)]{\it \hspace{1cm} $(n= 2, d=3)$ the union of a smooth conic and one of its tangents:
\[  f = x_{0}(x_{1}^{2} + x_{0}x_{2}) =0, \hspace{.6cm} (A_{3}).
\]
}
\end{enumerate}

This list contains the  plane curves found by Dolgachev \cite{Do}.

\medskip
June Huh conjectured a general finiteness principle for 
projective hypersurfaces with isolated singularities and fixed polar number  $\pol(V) =k$.  In case $k=2$, he conjectured that
such hypersurfaces  are only those in the following list:

\begin{conjecture}\rm \cite[Conjecture 20]{Huh} \it
 A projective hypersurface $V\subset \bP^{n}$ with only isolated singularities has polar degree 2 if and only if 
it is one of the following, after a linear change of homogeneous coordinates:
\begin{enumerate}
\rm\item{\it \hspace{.6cm} $(n=3, d=3)$  a normal cubic surface containing a single line:
\[  f = x_{0} x_{1}^{2} + x_{1} x_{2}^{2} + x_{1} x_{3}^{2} + x_{2}^{3} = 0,   \hspace{.6cm} (E_{6}).
\]
}
\rm\item{\it \hspace{.6cm} $(n=3, d=3)$  a normal cubic surface containing two lines:
\[  f =  x_{0} x_{1}x_{2} + x_{0} x_{3}^{2}  + x_{1}^{3} = 0, \hspace{.6cm} (A_{5}, A_{1}).
\]
}
\rm\item{\it \hspace{.6cm} $(n=3, d=3)$  a normal cubic surface containing three lines and three binodes:
\[  f =  x_{0} x_{1}x_{2}  + x_{3}^{3} = 0, \hspace{.6cm} (A_{2}, A_{2}, A_{2}).
\]
}
\rm\item{\it \hspace{.6cm} $(n=2, d=5)$  two smooth conics meeting at a single point and their common tangent:
\[  f =  x_{0} (x_{1}^{2}  + x_{0}x_{2}) (x_{1}^{2}  + x_{0}x_{2}+ x_{0}^{2}) = 0, \hspace{.6cm} (J_{2,4}).
\]
}
\rm\item{\it \hspace{.6cm} $(n=2, d=4)$  two smooth conics meeting at a single point:
\[  f =  (x_{1}^{2}  + x_{0}x_{2}) (x_{1}^{2}  + x_{0}x_{2}+ x_{0}^{2}) = 0, \hspace{.6cm} (A_{7}).
\]
}
\rm\item{\it \hspace{.6cm} $(n=2, d=4)$  a smooth conic, a tangent and a line passing  through the tangency point:
\[  f = x_{0}(x_{0}+ x_{1}) (x_{1}^{2}  + x_{0}x_{2})  = 0, \hspace{.6cm} (D_{6}, A_{1}).
\]
}
\rm\item{\it \hspace{.6cm} $(n=2, d=4)$  a smooth conic  and two tangent  lines:
\[  f = x_{0}x_{2} (x_{1}^{2}  + x_{0}x_{2})  = 0, \hspace{.6cm} (A_{1}, A_{3},A_{3}).
\]
}
\rm\item{\it \hspace{.6cm} $(n=2, d=4)$  three concurrent lines and a line not meeting the center point:
\[  f = x_{0}x_{1} x_{2}(x_{0}+ x_{1})  = 0, \hspace{.6cm} (D_{4}, A_{1}, A_{1},A_{1}).
\]
}
\rm\item{\it \hspace{.6cm} $(n=2, d=4)$  a cuspidal cubic and its tangent at the cusp:
\[  f = x_{0}(x_{1}^{3}+ x_{0}^{2}x_{2})  = 0, \hspace{.6cm} (E_{7}).
\]
}
\rm\item{\it \hspace{.6cm} $(n=2, d=4)$  a cuspidal cubic and its tangent at the smooth flex point:
\[  f = x_{2}(x_{1}^{3}+ x_{0}^{2}x_{2})  = 0, \hspace{.6cm} (A_{2}, A_{5}).
\]
}
\rm\item{\it \hspace{.6cm} $(n=2, d=3)$  a cuspidal cubic:
\[  f = x_{1}^{3}+ x_{0}^{2}x_{2}  = 0, \hspace{.6cm} (A_{2}).
\]
}
\rm\item{\it \hspace{.6cm} $(n=2, d=3)$  a smooth conic and a secant line:
\[  f = x_{1}(x_{1}^{2}+ x_{0}x_{2})  = 0, \hspace{.6cm} (A_{1}, A_{1}).
\]
}

\end{enumerate}
\end{conjecture}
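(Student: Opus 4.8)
\emph{The plan.} The statement is an equivalence, so both directions must be handled. For the ``if'' direction the plan is to evaluate \eqref{eq:polmu} on each of the twelve normal forms: locate the singular points, identify their types (all of $ADE$-type for the three cubic surfaces, and classical plane-curve singularities $A_k,D_k,E_k,J_{2,4}$ for the rest), sum the Milnor numbers, and verify $(d-1)^{n}-\sum_{p}\mu(V,p)=2$. For example the cubic surfaces give $2^{3}-6=2$ with $\sum_p\mu=6$ realised as $\mu(E_{6})=6$, $\mu(A_{5})+\mu(A_{1})=6$, and $3\mu(A_{2})=6$. This direction is routine once each singular locus is described.

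\emph{The ``only if'' direction} is the substance. Assume $\pol(V)=2$; then $V$ is not a cone. Formula \eqref{eq:polmu} gives the global identity $\sum_{p\in\Sing V}\mu(V,p)=(d-1)^{n}-2$, while Huh's inequality \eqref{eq:pol} gives, at \emph{every} singular point, $\mu^{\left< n-2\right>}(V,p)\le2$. The latter is a strong local restriction: for $n=2$ it reads $\mult_pV\le3$, and for $n\ge3$ it forces the generic hyperplane section of $(V,p)$ to be a hypersurface singularity of Milnor number $\le2$, i.e.\ of type $A_1$ or $A_2$, hence $\mult_pV=2$. So the local singularity types are pinned down to a short list before any global counting.

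The crux, and where I expect the real difficulty, is to bound $n$ and $d$. The identity $\sum_p\mu(V,p)=(d-1)^{n}-2$ places the total Milnor number within $2$ of its absolute maximum $(d-1)^{n}$, which is attained only by cones. The plan is to show this is possible only for $(n,d)\in\{(2,3),(2,4),(2,5),(3,3)\}$. The value $d=2$ is excluded immediately (a non-cone quadric is smooth, with $\pol=1$), and for the rest I would prove an upper bound for the total Milnor number of a non-cone degree-$d$ hypersurface with isolated singularities that is strictly smaller than $(d-1)^{n}-2$ as soon as $n\ge4$, or $n=3$ and $d\ge4$, or $n=2$ and $d\ge6$. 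The natural tool is semicontinuity of the singularity spectrum (Steenbrink, Varchenko) for the deformation of $V$ inside the space of degree-$d$ forms, which bounds the spectral numbers of the local singularities against those of the Milnor algebra of $x_{0}^{d}+\cdots+x_{n}^{d}$; in the curve case this is combined with du~Plessis--Wall bounds to force $d\le5$. This is the ``precise version'' of Huh's general finiteness principle and the hardest ingredient.

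With $(n,d)$ reduced to the four cases, I would finish by enumeration. For $(3,3)$, $V$ is a normal non-cone cubic surface, so its singularities are rational double points, and $\sum_p\mu=6$; by the classical classification of singular cubic surfaces (Schl\"afli--Cayley, Bruce--Wall) the only realisable configurations with $\sum\mu=6$ are $E_{6}$, $A_{5}{+}A_{1}$ and $3A_{2}$, giving (i)--(iii). For $(2,d)$ with $d\in\{3,4,5\}$, $V$ is a reduced plane curve with $\mult_pV\le3$ and $\sum_p\mu=(d-1)^{2}-2$; using $d=\sum_id_i$, the adjunction bound $\sum_p\delta_p\le\binom{d-1}{2}+(k-1)$ for a $k$-component curve, and a tangency analysis at the multiple points, I would list the admissible component structures, read off the singularity types, and for each surviving type produce the projective model and prove it unique up to $\mathrm{PGL}$, matching (iv)--(xii). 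The final obstacle here is \emph{realizability and uniqueness}: several configurations satisfy all numerical constraints yet are not realised by any plane curve of the prescribed degree, and excluding these---while showing each surviving configuration forms a single projective orbit---is the most delicate, case-intensive part of the proof.
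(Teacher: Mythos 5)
Your overall architecture matches the paper's: use Huh's inequality to pin down the local singularity types, use semicontinuity of the spectrum against a Fermat-type germ to bound $(n,d)$, then classify the surviving low cases (Bruce--Wall \cite{BW} for cubic surfaces, and for plane curves either a direct enumeration as you propose or, as the paper does, a citation of \cite{FM}). The genuine gap is in the crux step, which you both mis-state and leave without content. You propose to prove an upper bound for the total Milnor number of an \emph{arbitrary} non-cone degree-$d$ hypersurface that is strictly below $(d-1)^n-2$ once $n\ge 4$, or $n=3,\ d\ge4$, or $n=2,\ d\ge6$. Spectrum semicontinuity cannot deliver such an unconditional bound: it controls $\sum_i\deg_{]a,a+1]}\mathrm{Sp}(f_i,p_i)$ one unit interval at a time, and without knowing where the local spectra sit this says nothing about $\sum_i\mu_i$. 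The paper's argument uses the hypothesis $\pol(V)=2$ twice: once to get $\sum_i\mu_i=(d-1)^n-2$ exactly, and once (via \cite[Lemma 19]{Huh}) to restrict the singularities to types $A,D,E,J_{*,*}$, whose smallest spectral numbers are explicitly bounded below by $\frac{n-1}{2}-\frac 76$. Comparing this with the second spectral number $\frac{n+1}{d}-1$ of $f_{n,d}=x_1^d+\cdots+x_n^d$, which has multiplicity $n$, yields the inequality $(3(n-1)-1)(d-2)<14$; this computation is the heart of the proof and is absent from your plan. (Note also that your comparison germ $x_0^d+\cdots+x_n^d$ has $n+1$ variables, hence Milnor number $(d-1)^{n+1}$ and the wrong spectrum; the correct germ is the leading form of $f$ in an affine chart, equivalent to $\sum_{i=1}^n x_i^d$.)

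Moreover, even the correct coarse inequality does not finish the job: it still permits $(n,d)=(4,3)$, $(5,3)$ and $(3,4)$, none of which your trichotomy accounts for. The paper excludes $(5,3)$ and $(4,3)$ by separate, delicate arguments combining semicontinuity on two different intervals with the symmetry of the spectrum and explicit knowledge of the $A,D,E,J$ spectra (e.g.\ that only $J_{2,0}$ and $J_{4,0}$ contribute the spectral number $1$, so its total multiplicity cannot reach $4$ within a Milnor budget of $30$), and it disposes of quartic surfaces not by a Milnor-number bound but by invoking Huh's nontrivial classical classification. Your assertion that a single total-Milnor-number estimate kills $n=3,\ d\ge4$ is therefore unsubstantiated. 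The remaining ingredients of your proposal (the ``if'' direction via \eqref{eq:polmu}, the exclusion of $d=2$, the cubic-surface classification) are sound and agree with the paper.
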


Huh showed his conjecture for plane curves, cubic and quartic surfaces \cite{Huh}.
Fasarella and Medeiros \cite{FM} classified plane curves with $\pol(V) =2$ as a consequence of  formula 
\eqref{eq:pollink} and the computation of Euler characteristics.  
\medskip

We first prove Huh's conjecture for $\pol(V) =2$, then Huh's finiteness conjecture for $\pol(V) =k$. More precisely:

\begin{theorem}\label{t:mainPol2}
 The hypersurfaces $V\in \bP^{n}$ with isolated singularities and of polar degree 2 are only those in Huh's list.
 
In particular there are no such hypersurfaces  for $n>3$.
\end{theorem}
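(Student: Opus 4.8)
The plan is to feed $\pol(V)=2$ into the two tools already available in the excerpt—the Dimca--Papadima formula \eqref{eq:polmu} and Huh's inequality \eqref{eq:pol}—using the first to see that $V$ is \emph{almost maximally singular} for its degree, and the second to pin down the \emph{local} nature of each singularity. Concretely, \eqref{eq:polmu} becomes the global balance $\sum_{p\in\Sing V}\mu(V,p)=(d-1)^n-2$, so the total Milnor number sits only $2$ below the absolute maximum $(d-1)^n$; and \eqref{eq:pol} forces $\mu^{\langle n-2\rangle}(V,p)\le 2$ at every $p\in\Sing V$. The strategy is: (i) turn the local bound into a short list of admissible germ types; (ii) turn the near-maximality into an effective bound on the pair $(n,d)$; (iii) resolve the resulting finite list.

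First I would extract the local structure. The Teissier sectional sequence $\mu^{\langle 0\rangle}\le\mu^{\langle 1\rangle}\le\cdots\le\mu^{\langle n-1\rangle}$ is non-decreasing, and for the generic planar section one has $\mu^{\langle 1\rangle}(V,p)\ge(\mult_p V-1)^2$. Hence for $n\ge 3$ the bound $\mu^{\langle 1\rangle}(V,p)\le\mu^{\langle n-2\rangle}(V,p)\le 2$ gives $(\mult_p V-1)^2\le 2$, so every singular point has multiplicity $2$. By the splitting lemma the germ is a suspension $h(x_1,\dots,x_c)+x_{c+1}^2+\cdots+x_n^2$ of its corank-$c$ essential part $h$ (with $\mult h\ge 3$), and the shift of the sectional sequence under suspension yields $\mu^{\langle n-2\rangle}(V,p)=\mu^{\langle c-2\rangle}(h)$ when $c\ge 2$. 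Since $\mu^{\langle c-2\rangle}(h)\ge\mu^{\langle 1\rangle}(h)\ge(\mult h-1)^2\ge 4$ as soon as $c\ge 3$, the corank is at most $2$; and for $c=2$ the same computation forces $\mu^{\langle n-2\rangle}(V,p)=\mult(h)-1\le 2$, i.e.\ $\mult h=3$. Thus each singular point is either an $A_k$ (corank $1$) or a suspension of a plane-curve singularity of multiplicity $3$ (corank $2$); in particular the corank-$\ge 3$ germs that would inhabit higher-dimensional hypersurfaces are already ruled out.

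Next I would bound $(n,d)$, and this is the step I expect to be the main obstacle. The equation $\sum_p\mu(V,p)=(d-1)^n-2$ says the total Milnor number is within $2$ of the maximum, and the natural way to make this effective is through the Hodge-theoretic spectrum. The local spectra at the singular points of $V$ must fit, by Varchenko--Steenbrink semicontinuity applied to the deformation of $V$ to a smooth degree-$d$ hypersurface, inside the spectrum of the latter. Reading \eqref{eq:pollink} as $\pol(V)=\rk H_{n-1}(V\m H)$ and interpreting it as a count of spectral numbers of a smooth degree-$d$ hypersurface lying in a prescribed range then gives a lower bound for $\pol(V)$ that grows with both $n$ and $d$. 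Demanding that this lower bound be $\le 2$ confines $(n,d)$ to a short explicit list, all with $n\le 3$. Converting ``total Milnor number two below the maximum'' into such a sharp inequality—and in particular excluding every $n>3$, where the spectral room simply cannot accommodate $\mu_{\mathrm{tot}}=(d-1)^n-2$—is exactly where the spectrum and its semicontinuity carry the argument.

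Finally I would close on the finite list of surviving $(n,d)$. For $n=2$ the plane-curve case is settled in \cite{Huh} and \cite{FM}; for surfaces the cubic and quartic cases are settled in \cite{Huh}, where the cubic surfaces realizing $\sum_p\mu=(d-1)^3-2=6$ with the admissible local types reproduce entries (1)--(3), and the quartic analysis shows the required $\mu_{\mathrm{tot}}=25$ is unattainable, so no $d=4$ surface occurs. For each remaining $(n,d)$ I would match the balance $\sum_p\mu(V,p)=(d-1)^n-2$ against the admissible germ types from the local step, which leaves only finitely many singularity configurations and, after fixing the projective equation up to a linear change of coordinates, recovers precisely the twelve entries of Huh's list. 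Since the bound on $(n,d)$ already forces $n\le 3$, this also establishes that there are no such hypersurfaces for $n>3$.
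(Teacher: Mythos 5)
Your overall strategy coincides with the paper's: restrict the local singularity types via Huh's inequality, then play the near\hbox{-}maximality $\sum_p\mu(V,p)=(d-1)^n-2$ against the spectrum of the cone singularity $f_{n,d}=x_1^d+\cdots+x_n^d$ via Varchenko--Steenbrink semicontinuity, and finally quote the known classifications for plane curves, cubic and quartic surfaces. (One imprecision: the semicontinuity is not applied to ``the deformation of $V$ to a smooth hypersurface'' --- a smooth hypersurface has no spectrum --- but to the degeneration of the affine part $V\setminus H$ onto the cone $\{f_d=0\}$, so that the local spectra of the $(V,p_i)$ must fit inside $\mathrm{Sp}(f_{n,d},0)$; this is what you evidently intend.)

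The genuine gap is in your step (ii), where you assert that the semicontinuity bound ``confines $(n,d)$ to a short explicit list, all with $n\le 3$.'' It does not. The first-pass argument --- comparing the second spectral number $-1+\tfrac{n+1}{d}$ of $f_{n,d}$ (multiplicity $n$) with a lower bound for the smallest spectral number of the admissible germs --- yields only the inequality $(3(n-1)-1)(d-2)<14$, which leaves alive not just $n\le 3$ but also the cubic threefold case $(n,d)=(4,3)$ and the cubic fourfold case $(n,d)=(5,3)$. These two cases are where most of the work lies, and they require separate, finer spectral arguments: for $(5,3)$ one shows that semicontinuity on $]-\infty,1[$ and $]1,2[$ forces the spectral number $1$ to occur with multiplicity at least $4$ among the local spectra, while the only admissible germs having $1$ as a spectral number are $J_{2,0}$ and $J_{4,0}$ with Milnor numbers $10$ and $22$, so the total multiplicity cannot exceed $3$; for $(4,3)$ one needs the quantitative estimate $\deg_{]-\infty,-1/3]}\mathrm{Sp}(g,0)\le \mu(g,0)/4$ for every admissible curve germ $g$, combined with the symmetry of the spectrum, to contradict $\deg_{]2/3,5/3[}\mathrm{Sp}(f_{4,3})=10$. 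Neither of these arguments (nor any substitute) appears in your proposal, so the claim ``all with $n\le 3$'' is unsupported exactly where the theorem's second assertion lives. Relatedly, your local analysis only pins down multiplicity $2$ and corank $\le 2$; the refined case analysis above needs the precise $A,D,E,J_{k,i}$ list with its explicit spectra, which is Huh's Lemma 19 and which you would have to either cite or re-derive in that strength.
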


\medskip


The main idea is to consider the affine hypersurface $V\setminus H$, for some general hyperplane $H$ not passing through any singularity, and to deform it into the  cone $V_{n,d} \setminus H$ defined by $f_{n, d}:= x_{1}^{d}+\cdots + x_{n}^{d} =0$.
This induces an embedding of the direct sums of the Milnor lattices of the singularities of $V$ into the Milnor lattice of the isolated singularity of $f_{n, d}$. We apply to this deformation the semicontinuity of the spectrum proved by Varchenko \cite{Va} and  Steenbrink \cite{St85} (Section \ref{polar2n>3}).

\smallskip
 June Huh formulated  the following general principle for any $k>2$:

\begin{conjecture}\label{c:huh2}\rm \cite[page  1545]{Huh} \it
There is no projective hypersurface $V\subset \bP^{n}$ of polar
 degree $k$ with only isolated singular points, for sufficiently 
large $n$ and $d = \deg V$.
\end{conjecture}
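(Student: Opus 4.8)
The plan is to prove the contrapositive quantitative statement: the polar degree of a non-cone projective hypersurface with isolated singularities tends to infinity as $n$ and $d$ grow, so that any fixed value $k$ can be realized only in a bounded range of $(n,d)$. Since $\pol(V)=k>0$, the hypersurface $V$ is not a cone, and I first record a multiplicity constraint. If some $p\in\Sing V$ had multiplicity $d$, then after moving $p$ to $[0:\cdots:0:1]$ the homogeneous equation $f$ would, by the vanishing of all Taylor terms of affine order $<d$, be independent of the last coordinate, i.e. $V$ would be the cone with apex $p$, forcing $\pol(V)=0$. Hence every $p\in\Sing V$ satisfies $\mathrm{mult}_p V\le d-1$; this is the only local hypothesis I will use beyond the deformation.

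Next, exactly as for Theorem \ref{t:mainPol2}, I would degenerate the affine piece $V\setminus H$ to the Fermat cone $f_{n,d}=x_1^d+\cdots+x_n^d$, producing an embedding $\bigoplus_{p\in\Sing V}L_p\hookrightarrow L_{f_{n,d}}$ of Milnor lattices whose cokernel has rank $\pol(V)=k$ by \eqref{eq:polmu} and \eqref{eq:pollink}. By the Varchenko--Steenbrink semicontinuity, the total spectrum $\mathrm{Sp}_V:=\sum_{p}\mathrm{Sp}(V,p)$ is dominated by $\mathrm{Sp}_{f_{n,d}}$ in every interval of length $1$. Partitioning $(c,\infty)$ into consecutive unit intervals and using that both spectra are symmetric about $n/2$ gives, for every threshold $c$, the tail domination $\#\{\beta\in\mathrm{Sp}_V:\beta<c\}\le\#\{\beta\in\mathrm{Sp}_{f_{n,d}}:\beta<c\}$, while the total defect is $\#\mathrm{Sp}_{f_{n,d}}-\#\mathrm{Sp}_V=k$. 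Splitting $k$ into its two halves across $c$ and discarding the (nonnegative) upper half yields the key inequality $k\ge \#\{\beta\in\mathrm{Sp}_{f_{n,d}}:\beta<c\}-\#\{\beta\in\mathrm{Sp}_V:\beta<c\}$, valid for all $c$.

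The Fermat spectrum is explicit: it is the multiset of $\tfrac1d(i_1+\cdots+i_n)$ with $1\le i_j\le d-1$, symmetric about $n/2$, with $\#\{\beta\in\mathrm{Sp}_{f_{n,d}}:\beta<1\}=\binom{d-1}{n}$. The strategy is to pick $c$ near the edge of the spectrum (e.g. $c=1$, where the first term equals $\binom{d-1}{n}$) and to bound the edge contribution $\#\{\mathrm{Sp}_V<c\}$ strictly, so that the displayed inequality forces $k\to\infty$ in the regime $n,d\to\infty$. Here I would bring in the constraint $\mathrm{mult}_p V\le d-1$, which after a Thom--Sebastiani splitting of the nondegenerate quadratic part shifts the low part of each local spectrum upward, together with Varchenko's bound $\#\Sing V\le A_n(d)$ (the maximal number of Fermat spectral numbers in a unit interval) to control the \emph{aggregate} edge count.

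The main obstacle is precisely this combinatorial and Hodge-theoretic estimate. Unlike the top of the spectrum, the low spectral numbers of a single non-cone singularity may dip \emph{below} those of the degree-$d$ cone (the semi-homogeneous cone maximizes the minimal exponent, hence minimizes the number of small spectral numbers), so the edge defect cannot be read off singularity by singularity; it must be extracted from the interplay of semicontinuity, the symmetry about $n/2$, the multiplicity bound, and the global bound on the number of singular points. To close the remaining case of a single very deep singularity, where the aggregate edge estimate is weakest, I would invoke Huh's inequality $\pol(V)\ge\mu^{\left< n-2\right>}(V,p)$, whose right-hand side already grows for a deep enough $p$. Once the forced defect is shown to be unbounded in $(n,d)$, no hypersurface with $\pol(V)=k$ can exist for large $n$ and $d$, which is Conjecture \ref{c:huh2}.
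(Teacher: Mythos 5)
Your framework---degenerating $V\setminus H$ to the Fermat germ $f_{n,d}$, applying Varchenko--Steenbrink semicontinuity over unit intervals, and extracting from the total defect $k=(d-1)^n-\sum_i\mu(f_i,p_i)$ the tail inequality $k\ \ge\ \deg_{]-\infty,c]}\mathrm{Sp}(f_{n,d})-\deg_{]-\infty,c]}\sum_i\mathrm{Sp}(f_i,p_i)$---is exactly the engine of the paper's proof of Theorems \ref{t:h0} and \ref{t:Huh2}, and that part of your argument is sound. But there is a genuine gap at the step you yourself flag as ``the main obstacle'': you have no mechanism to bound the aggregate edge count $\deg_{]-\infty,c]}\sum_i\mathrm{Sp}(f_i,p_i)$, and the substitutes you propose cannot supply one. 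The multiplicity bound $\mathrm{mult}_p V\le d-1$ only yields $\alpha_1(f_i,p_i)\ge -1+n/(d-1)$, a threshold that sinks toward $-1$ as $d\to\infty$ and therefore never isolates a $d$-independent low portion of the Fermat spectrum; and Varchenko's bound on $\#\Sing V$ controls the number of singular points, not the number of low spectral numbers that a single deep singularity can carry. Your fallback---that for ``a single very deep singularity'' Huh's inequality \eqref{eq:pol} saves the day because $\mu^{\left< n-2\right>}(V,p)$ ``already grows for a deep enough $p$''---is precisely the unproven implication that spectral depth forces a large sectional Milnor number, i.e.\ the contrapositive of the paper's Lemma \ref{l:ineq}; asserting it without proof makes the argument circular, and there is also no reason the problematic configuration should reduce to a single singular point.

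The missing idea is to apply Huh's inequality $\mu^{\left< n-2\right>}(V,p_i)\le k$ at \emph{every} singular point, \emph{before} any counting, so as to obtain a uniform lower bound on the smallest local spectral number; then in your key inequality the term $\deg_{]-\infty,c]}\sum_i\mathrm{Sp}(f_i,p_i)$ vanishes for a suitable $c$, leaving $k\ge \deg_{]-\infty,c]}\mathrm{Sp}(f_{n,d})$, and the explicit Fermat spectrum does the rest. The paper implements this twice. For the bound on $n$ (Theorem \ref{t:h0}), Lemma \ref{l:corankbound} uses $\mu\ge 2^{\corank}$ to get $\corank f_{i|H}\le \log_2 k$, the fact that slicing changes corank by at most one, and Thom--Sebastiani splitting of the nondegenerate quadratic part to lift each local spectrum above $(n-r_i-3)/2$. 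For the bound on $d$ (Theorem \ref{t:Huh2}), Lemma \ref{l:ineq} uses that $\mu(f_{i|H},p_i)\le k$ makes the slice $(k+1)$-determined, hence comparable by a deformation to $f_{n-1,k+2}$, giving $\alpha_1(f_{i|H},p_i)\ge -1+\frac{n-1}{k+2}$, which the strict slice-monotonicity of $\alpha_1$ (Lemma \ref{l:h2}) transfers to the ambient germ. Both thresholds are independent of $d$---exactly what your multiplicity bound can never provide---and together with the count ${n+\ell_{n,k} \choose n}>k$ of low Fermat spectral numbers they force $n<\max\{k,\,5+3\log_2 k\}$ and $d<\max\{2+\ell_{n,k},\,(n+\ell_{n,k})(k+2)/(n-1)\}$, from which the finiteness asserted in the conjecture follows.
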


We answer positively  to Huh's general conjecture in the following precise way:
\begin{theorem}[Finiteness Theorem]\label{t:Huh-compact}
For any integer $k\geq 2$, let $K_k$ denote the set of pairs of integers $(n,d)$ with $n\geq 2$ and $d\geq 3$, such that there exists a projective hypersurface $V$ in $\mathbb{P}^n$ of degree $d$ with isolated singularities and  $\pol(V)=k$.

 Then $K_k$ is finite for any $k\geq 2$. 
 \end{theorem}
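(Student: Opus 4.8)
The plan is to bound $n$ and $d$ separately: first to show $n\le N(k)$, and then, for each admissible $n$, to bound $d$. The three tools are the polar formula \eqref{eq:polmu}, which here reads
\[
 k=\pol(V)=(d-1)^{n}-\sum_{p\in\Sing V}\mu(V,p),
\]
Huh's inequality \eqref{eq:pol}, and the semicontinuity of the singularity spectrum (\cite{Va,St85}) applied to the deformation of $V\setminus H$ into the affine cone $V_{n,d}\setminus H=\{f_{n,d}=0\}$ from the introduction. Recall that this deformation embeds $\bigoplus_{p}$(Milnor lattice of $(V,p)$) into the Milnor lattice of the Brieskorn--Pham germ $f_{n,d}$, whose spectrum is $\{(a_1+\cdots+a_n)/d\mid 1\le a_i\le d-1\}$, symmetric about its centre $n/2$.

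First I would use \eqref{eq:pol} to control the local type of the singularities. A generic $(n-2)$-dimensional linear section of $(V,p)$ has multiplicity $\mult_p(V)$, hence Milnor number at least $(\mult_p(V)-1)^{\,n-2}$; so if some $p$ has $\mult_p(V)\ge 3$ then $\mu^{\langle n-2\rangle}(V,p)\ge 2^{\,n-2}$ and \eqref{eq:pol} forces $n\le\log_2 k+2$. Hence I may assume every singularity has multiplicity $2$ and write its germ, after the splitting lemma, as a suspension of a corank-$c_p$ germ of multiplicity $\ge 3$. The same slicing argument applied to the corank part gives $\mu^{\langle n-2\rangle}(V,p)\ge 2^{\,c_p}$ whenever $n-2\ge c_p$, while $c_p>n-2$ again bounds $n$; so, outside a bounded range of $n$, every singularity of $V$ has $\corank_p\le\log_2 k=:c_0$. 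Since a suspension shifts the spectrum by $1/2$ for each added square, all spectral numbers coming from $\Sing V$ then lie in the fixed central band $B=\bigl(\tfrac{n-c_0}{2},\tfrac{n+c_0}{2}\bigr)$ of width $c_0$, independent of $n$ and $d$.

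Next I would weigh this concentration against the spread of the spectrum of $f_{n,d}$. By semicontinuity, in every half-open interval of length $1$ the number of spectral numbers coming from $\Sing V$ is at most the corresponding number for $f_{n,d}$; tiling a width-$(c_0+1)$ band $B'\supset B$ by such intervals gives $\sum_{p}\mu(V,p)\le N_{n,d}(B')$, where $N_{n,d}$ counts spectral numbers of the cone. Therefore
\[
 k=\pol(V)=(d-1)^{n}-\sum_{p}\mu(V,p)\ \ge\ (d-1)^{n}-N_{n,d}(B').
\]
A local central limit estimate for the sums $a_1+\cdots+a_n$ (each $a_i$ uniform on $\{1,\dots,d-1\}$) shows that the spectrum of $f_{n,d}$ has standard deviation of order $\sqrt{n}$ about $n/2$, independently of $d$; thus the fixed-width band $B'$ contains only a fraction $\theta_n=O(c_0/\sqrt{n})$ of the $(d-1)^{n}$ spectral numbers, whence $(1-\theta_n)(d-1)^{n}\le k$. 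As soon as $n$ exceeds a threshold of order $c_0^{2}=(\log_2 k)^2$ one has $\theta_n<1$, and then $(d-1)^{n}\le k/(1-\theta_n)$ bounds $(d-1)^n$, hence both $n$ and $d$. This disposes of all but finitely many small values of $n$.

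The hard part is precisely this finite list of small dimensions $n\lesssim(\log_2 k)^2$, where $B'$ already fills $(0,n)$ and the tail estimate collapses: bounded corank no longer pushes the spectrum of $V$ away from the ends of $(0,n)$, and a deep singularity of multiplicity $\le d-1$ (small log-canonical threshold) can a priori populate the extreme spectral numbers of the cone, so no defect is visible in any single unit interval. For each such fixed $n$ I would instead use the full majorization of the spectrum of $\bigoplus_p(V,p)$ by that of $f_{n,d}$, together with the constraint that a singularity of ``depth'' $b$ can sit on a degree-$d$ hypersurface only for $b\lesssim d$, to force the defect $(d-1)^{n}-\sum_p\mu(V,p)$ to grow with $d$; for $n=2$ this is the classification of Fasarella and Medeiros \cite{FM}, and the effect is already transparent for arrangements of lines, where $\pol(V)=k$ small forces the configuration to be a bounded perturbation of a cone and hence $d$ to be bounded. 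Turning this defect-growth into a uniform statement over all admissible singularity configurations in each small fixed dimension is, I expect, the main obstacle of the proof.
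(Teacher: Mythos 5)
Your argument has two regimes, and only one of them is closed. For $n$ large compared to $(\log_2 k)^2$ the chain (Huh's inequality $\Rightarrow$ bounded corank $\Rightarrow$ spectrum of each $(V,p)$ confined to a central band of width $O(\log_2 k)$ $\Rightarrow$ tiling by half-open unit intervals plus semicontinuity $\Rightarrow$ $(1-\theta_n)(d-1)^n\le k$ with $\theta_n=O(\log_2 k/\sqrt n)$) is sound, and it even bounds $d$ in that regime. It is a quantitatively different, CLT-flavoured use of the same ingredients that the paper's Theorem \ref{t:h0} uses, where only the second spectral number $-1+\frac{n+1}{d}$ of $f_{n,d}$, of multiplicity $n$, is played against the lower bound $\frac{n-r_i-3}{2}$ on the first spectral numbers of the local singularities.

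The genuine gap is the bound on $d$ for each of the finitely many remaining small $n$. You correctly flag this as the hard part but leave it as a hope (``force the defect to grow with $d$'') with no mechanism: the appeal to \cite{FM} covers only $n=2$, and the heuristic that a singularity of depth $b$ needs $d\gtrsim b$ points in the wrong direction --- what is needed is an upper bound on the depth in terms of $k$ alone, independent of $d$. This is exactly what the paper's Lemma \ref{l:ineq} supplies: Huh's inequality gives $\mu(f_{i|H},p_i)\le k$, so the slice germ is $(k+1)$-determined, hence a deformation of $f_{n-1,k+2}$, hence $\alpha_1(f_{i|H},p_i)\ge -1+\frac{n-1}{k+2}$, and by Lemma \ref{l:h2} the same bound holds strictly for $\alpha_1(f_i,p_i)$ --- a threshold depending only on $n$ and $k$. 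Since the lowest $\ell_{n,k}+1$ spectral numbers of $f_{n,d}$ have total multiplicity ${n+\ell_{n,k} \choose n}>k$ and all lie below $-1+\frac{n+\ell_{n,k}}{d}$, for $d$ large this whole tail sits below the threshold, and semicontinuity applied to $]\gamma_{n,k},\infty[$ yields $(d-1)^n-k\le (d-1)^n-{n+\ell_{n,k} \choose n}$, a contradiction; that is Theorem \ref{t:Huh2}. Without some such $d$-independent lower bound on the first spectral numbers of the local singularities, your small-$n$ case does not close, so the proposal as written does not yet prove the finiteness of $K_k$.
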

 
By Theorem \ref{t:mainPol2} we are showing that $K_2=\{(2,3),(2,4),(2,5),(3,3)\}$ and that this is a sharp equality. 
To prove Theorem  \ref{t:Huh-compact} for $k\ge 3$, we show that any  $(n,d) \in K_k$ satisfies the inequalities  $n < \max(k,5+\log_2 k)$ by Theorem \ref{t:h0},  and $d < \max \{ 2+ \ell_{n,k},  (n+\ell_{n,k})(k+2)/(n-1)\}$
 by Theorem \ref{t:Huh2}, where  $\ell_{n,k} := \min \left\{\ell \in \bN \ \middle| {n+\ell \choose n}>k\right\}$.




\section{Spectrum and the semicontinuity argument}\label{spectrum}

We recall here the properties of the spectrum that we shall use in the proofs.

\subsection{Properties}
Let $X$ be a complex manifold of dimension $n$ and $f:X \to \mathbb{C}$ a holomorphic function. 
Let $x\in X$ be an isolated critical point of $f$ with Milnor fibre $X_{f,x}$. The group $\tilde{H}^{n-1}(X_{f,x})$ is free abelian of finite rank equal to the Milnor number $\mu(f,x)$. Let $T$ denote the monodromy operator and let $T_s$ be its semisimple part. Then $\tilde{H}^{n-1}(X_{f,x})$ underlies a mixed Hodge structure with Hodge filtration $F^\bullet = (0 \subset F^{n-1} \subset F^{n-2} \subset \cdots \subset F^0=\tilde{H}^{n-1}(X_{f,x},\mathbb{C}))$, preserved by $T_s$. By the monodromy theorem, the eigenvalues of $T_s$ are roots of unity. 


The \emph{spectrum} $\mathrm{Sp}(f,x)$ is an element of the group ring $\mathbb{Z}[\mathbb{Q}]$:
\[ \mathrm{Sp}(f,x) = \sum_\alpha n_\alpha(\alpha)\]
a finite sum with $n_\alpha\in \mathbb{N}$. Here $n_\alpha$ is defined as the multiplicity of $\exp(-2\pi i\alpha)$ as an eigenvalue of $T_s$ acting on $F^p/F^{p+1}$, where $p=\lfloor n-1-\alpha \rfloor$. The spectrum has the following properties.
\begin{description}
\item[Range] $n_\alpha \neq 0 \Rightarrow -1<\alpha<n-1$.
\item[Symmetry] If $\alpha+\beta=n-2$ then $n_\alpha=n_\beta$. See \cite[Corollary to Lemma 13.14]{AGV2} for these two properties. 
\item[Stability] If $g:X \times \mathbb{C}$ is defined by $g(z,t)=f(z)+t^2$, and $x\in X$ with $\mathrm{Sp}(f,x) = \sum_\alpha n_\alpha(\alpha)$, then $\mathrm{Sp}(g,(x,0)) = \sum_\alpha n_\alpha(\alpha+\frac 12)$. See  \cite[Corollary 1 to Theorem 13.7]{AGV2}.

\item[$\mu$-constant deformation invariance]   The spectrum is constant in any deformation of isolated hypersurface singularity germs with constant Milnor number,  see \cite{St85}.
\item[Semicontinuity] 
Let $(Y_{0}, 0)$, where  $Y_{0}:= \{ g=0\} \subset \bC^{n}$,  be the germ  of a hypersurface with isolated singularity.  Let $g_{s}$ be a good representative of a deformation of $g$ such that $Y_{s} := \{ g_{s}=0\}$ has isolated singular points 
$\mathrm{Sing}(Y_{s})=\{p_1,\ldots,p_r\}$ which tend to the origin  0 when $s\to 0$.
Let  $g_i:(\bC^{n},p_i)\to (\mathbb{C},0)$ be a local equation for $Y_{s}$ at $p_i$, for $i=1,\ldots,r$. 
Then for each $a\in \mathbb{R}$ one has:
\[
\sum_{i=1}^r \deg_{]a,a+1[}\mathrm{Sp}(g_i,p_i) \leq \deg_{]a,a+1[} \mathrm{Sp}(g,0)
\]
in case we have a deformation of lower weight of a quasi-homogeneous function germ $g$,
and
\[
\sum_{i=1}^r \deg_{]a,a+1]}\mathrm{Sp}(g_i,p_i) \leq \deg_{]a,a+1]} \mathrm{Sp}(g,0)
\]
in general.

Here for any $A \subset \mathbb{R}$ the function $\deg_A:\mathbb{Z}[\mathbb{Q}]\to \mathbb{Z}$ is defined by 
$\deg_A(\sum_\alpha n_\alpha(\alpha)):=\sum_{\alpha\in A} n_\alpha$. The first inequality  is a special case of a result of Varchenko \cite{Va}, and the second inequality follows from \cite[Theorem 2.4]{St85}.
\end{description}

\subsection{Deformation to $f_{n,d}$}
Let $H:= \{x_0 = 0\} \subset \bP^{n}$ be a generic hyperplane with respect to our hypersurface $V := \{f=0\}$
 and write $f = f_d + x_0 f_{d-1} + \cdots + x_0^d f_0$
 where $f_i\in\mathbb{C}[x_1,\ldots,x_n]$ is homogeneous of degree $i$.
We consider the 1-parameter 
 family of polynomials 
$g_s (1,x_1, \cdots , x_n) := f_d + s f_{d-1} + \cdots + s^d f_0$ on $\bC^{n} = \bP^{n} \m H$, see e.g. \cite{Br}.
Then $f_{d}$ is a general homogeneous polynomial which, as germ at the origin $0 \in \bC^{n}$,
 is topological equivalent to  the polynomial $f_{n,d}:=\sum_{i=1}^{n} x_i^d$. 
 
 Therefore the family $g_s$ describes a deformation of lower weight of $g_{1} = f_{|\bC^{n}}$ to $g_{0} = f_{d}$, hence a deformation from the hypersurface $V\m H$   to the hypersurface  $\{f_{d}=0\}$, which is topological equivalent to the hypersurface  $\{ f_{n,d}=0\}$.
 We may thus  apply  the semicontinuity of the spectrum and,  if the local singularities $(V,p_i)$ of $V$ are defined by $f_i = 0$, we get:

\begin{equation}\label{eq:1}
\sum_{i=1}^r \deg_{]a,a+1[}\mathrm{Sp}(f_i,p_i) \leq \deg_{]a,a+1[} \mathrm{Sp}(f_{n,d},0)
\end{equation}
and
\begin{equation}\label{eq:2}
\sum_{i=1}^r \deg_{]a,a+1]}\mathrm{Sp}(f_i,p_i) \leq \deg_{]a,a+1]} \mathrm{Sp}(f_{n,d},0).
\end{equation}

\subsection{Spectra of special singularities}

As Huh showed in \cite[Lemma 19]{Huh} by using his bound \eqref{eq:pol} and Arnold's classification results \cite{AGV2},   the singularities which may occur on a projective hypersurface of dimension $n-1$ of polar degree $1$ or $2$ are within the following list:
\medskip
\noindent
\begin{center}
\begin{tabular}{|l|l|c|}
\hline
type & equation & Milnor number \cr
\hline
$A_k$ with $k\geq 1$ & $x_1^{k+1}+q_{n-1}$ & $k$\cr
$D_k$ with $k\geq 4$ & $x_1^2x_2+x_2^{k-1} +q_{n-2}$ & $k$ \cr
$E_{6k}$ with $k\geq 1$  & $x_1^3+x_2^{3k+1}+q_{n-2}$ & $6k$ \cr
$E_{6k+1}$ with $k\geq 1$  & $x_1^3+x_1x_2^{2k+1}+q_{n-2}$ & $6k+1$\cr
$E_{6k+2}$ with $k\geq 1$ & $x_1^3+x_2^{3k+2}+q_{n-2}$ & $6k+2$\cr
$J_{k,i}$ with $k\geq 2,\ i\geq 0$ & $x_1^3+x_1^2x_2^k+x_2^{3k+i}+q_{n-2}$ & $6k-2+i$ \cr
\hline
\end{tabular}
\end{center}

\bigskip
\noindent
where $q_{n-1-j}:=x_{j+2}^2+\cdots + x_{n}^2$, and in each case we have exhibited one of the possible equations in the $\mu$-class of the singularity. Note that all types except $J_{k,i}$ with $i>0$  are represented by a weighted homogeneous function. It was shown in \cite[Example 5.11]{St76} for $f\in \mathbb{C}[x_1,x_2]$ with an isolated singular point at $0$ and weighted homogeneous with weights $w_1$ and $w_2$, that:
\[ \sum n_\alpha t^{\alpha+1} = \frac{t^{w_1}-t}{1-t^{w_1}}\cdot \frac{t^{w_2}-t}{1-t^{w_2}} \]
The weights are as follows:

\medskip
\noindent
\begin{center}
\begin{tabular}{|c|c|c|c|c|c|c|c|}
\hline
Type & $A_k$ & $D_k$ & $E_{6r}$ & $E_{6r+1}$ & $E_{6r+2}$ & $J_{k,0}$ \cr
\hline
$w_1$ & $\frac 1{k+1}$ & $\frac{k-2}{2k-2}$ & $\frac 13$ & $\frac 13$ & $\frac 13$ & $\frac 13$ \cr
\hline
$w_2$ & $\frac 12$ & $\frac 1{k-1}$ & $\frac 1{3r+1}$ & $\frac 2{6r+3}$ & $\frac 1{3r+2}$ & $\frac 1{3k}$ \cr
\hline
\end{tabular}
\end{center}

\bigskip
\noindent
and the spectra of the corresponding curve singularities are:

\medskip
\noindent
\begin{center}
\begin{tabular}{|l|l|}
\hline
$A_k$ & $\sum_{j=1}^k\left(-\frac 12+\frac j{k+1}\right)$ \cr
$ D_k$ & $(0)+\sum_{j=1}^{k-1}\left(-\frac 12 + \frac{2j-1}{2k-2}\right)$ \cr
$ E_{6k}$ & $ \sum_{j=1}^{3k}\left(-\frac 23+\frac j{3k+1}\right) +  \sum_{j=1}^{3k}\left(-\frac 13+\frac j{3k+1}\right)$ \cr
$ E_{6k+1}$ & $ (0) + \sum_{i=1}^2\sum_{j=1}^{3k} \left( -\frac i3+\frac{2j}{6k+3}\right)$ \cr
$ E_{6k+2}$ & $ \sum_{j=1}^{3k+1}\left(-\frac 23+\frac j{3k+2}\right) +  \sum_{j=1}^{3k+1}\left(-\frac 13+\frac j{3k+2}\right)$ \cr
$J_{k,0}$ & $\sum_{j=1}^{3k-1}\left(-\frac 23+\frac j{3k}\right) +  \sum_{j=1}^{3k-1}\left(-\frac 13+\frac j{3k}\right).$ \cr
\hline
\end{tabular}
\end{center}

\bigskip
\noindent

If $i>0$ then the singularity $J_{k,i}$ is still nondegenerate with respect to its Newton diagram, so its spectrum can be computed as in \cite[Sect. 5.15]{St76}. See also \cite[Sect. 13.3.4]{AGV2}. To describe the result, we only  list the negative spectral numbers (again in the case of the corresponding curve singularities).  These numbers occur in two groups, those from the first group having  denominator $3k$ and those from the second having denominator $6k+2i$.
The numerators occurring depend on the parities of $k$ and $i$. The numerators of the first group are:

\medskip
\noindent
\begin{center}
\begin{tabular}{|l|l|}
\hline
$k$ even & $-2k+1,-2k+2,\ldots,-3k/2,-k+1,-k+2,\ldots,-1$ \cr
$k$ odd & $-2k+1,-2k+2,\ldots,(-3k-1)/2,-k+1,-k+2,\ldots,-1$\cr
\hline
\end{tabular}
\end{center}

\bigskip
\noindent
and the numerators of the second group are the integers $\ell$ in the interval  $(-(3k+i),0)$ with the same parity as $i$. 

Observe that the spectral numbers in the second group are all greater than $-\frac 12$. 
\subsection{Spectrum of $f_{n,d}$}\label{ss:gen}
The singularity $f_{n,d}=\sum_{i=1}^{n} x_i^d$   
has Milnor number $(d-1)^{n}$ and spectrum $\sum_{k\in \mathbb{Z}}u_k(\frac kd)$, where: 
\[u_k = \sharp\{(a_1,\ldots,a_{n})\in \mathbb{N}^{n}\mid 1\leq a_j\leq d-1 \mbox{ and }\sum_ja_j=k+d\}. \]



\section{Hypersurfaces of polar degree $2$}\label{polar2n>3}

\begin{theorem}\label{t:pol2}
There is no projective hypersurface $V\subset \bP^{n}$ with isolated singularities, of polar degree $\pol(V)=2$, and dimension and degree at least three (i.e. $n\ge 4$ and $d\ge 3$). 
\end{theorem}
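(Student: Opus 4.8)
The plan is to feed the spectral semicontinuity inequalities \eqref{eq:1}--\eqref{eq:2} for the deformation of $V\m H$ to the quasi-homogeneous cone $f_{n,d}$ into the polar-degree formula \eqref{eq:polmu}. Put $c:=\frac{n-2}{2}$, the common centre of symmetry of every spectrum in play. From \eqref{eq:polmu} and $\pol(V)=2$ one gets $\sum_i\mu_i=(d-1)^n-2$, so the direct sum of the local Milnor lattices misses only two of the $(d-1)^n$ vanishing cycles of $f_{n,d}$; the whole argument is a quantitative version of the statement that these two spectra cannot be that close once $n\ge4$.

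First I would localise the spectra of the singular points. By \cite[Lemma 19]{Huh} each germ $(V,p_i)$ is a suspension of one of the listed plane-curve singularities, so its spectrum in $n$ variables is the corresponding curve spectrum translated by $c$, and is symmetric about $c$. Every curve spectral number exceeds $-\tfrac23$; moreover a germ lying on a hypersurface of degree $d$ is cut out by a polynomial of degree $\le d$, so the top exponent of its normal form is bounded by $d$, and reading the tables this forces the smallest curve spectral number to be at least $\ell_d:=-\tfrac23+\varepsilon_d$ for an explicit $\varepsilon_d>0$ of order $1/d$ (for instance $\varepsilon_3=\tfrac13$, realised by $D_4$, since only $A_1,A_2,D_4$ occur on a cubic). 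Hence all local spectral numbers lie in the band $[\,c+\ell_d,\ c-\ell_d\,]$, of length $-2\ell_d<\tfrac43$.

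Now compare with $f_{n,d}$. Its spectrum reaches down to $\tfrac nd-1$, a distance $\tfrac n2\!\left(1-\tfrac2d\right)\ge\tfrac n6$ below $c$, and its two lowest values $\tfrac{n-d}{d},\tfrac{n-d+1}{d}$ carry multiplicities $1$ and $n$. Covering the band by two unit windows and adding the corresponding instances of \eqref{eq:2}, I obtain $(d-1)^n-2=\sum_i\mu_i\le\deg_{W}\mathrm{Sp}(f_{n,d})$ for their union $W$, so at most two spectral numbers of $f_{n,d}$ may lie strictly below the band's lower edge $c+\ell_d$. But the inequality $\tfrac{n-d+1}{d}<c+\ell_d$ is elementary and, using the table values of $\ell_d$, holds for every pair with $n\ge4,\ d\ge3$ except $(n,d)=(4,3)$; whenever it holds, both lowest values fall strictly below the band, producing $1+n\ge5>2$ spectral numbers there — a contradiction.

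The borderline pair $(n,d)=(4,3)$ is the real obstacle, and it is genuinely tight: the band is exactly $[\tfrac23,\tfrac43]$ and the full spectrum of $f_{4,3}$, namely $\tfrac13,\tfrac23,1,\tfrac43,\tfrac53$ with multiplicities $1,4,6,4,1$, sits essentially inside it, so the two-window count only yields equality. To finish I would use that on a cubic the sole singularities are $A_1,A_2,D_4$, with $n$-variable spectra $\{1\}$, $\{\tfrac56,\tfrac76\}$ and $\{\tfrac23,1,1,\tfrac43\}$; writing $s,t,w$ for their numbers, $s+2t+4w=14$. Since $f_{4,3}$ is quasi-homogeneous and the deformation is of lower weight, the sharp \emph{open}-interval inequality \eqref{eq:1} applies on $]\tfrac13,\tfrac43[\,$, where $f_{4,3}$ carries exactly $\tfrac23^{(4)},1^{(6)}$, i.e. $10$ spectral numbers, whereas $A_1,A_2,D_4$ contribute $1,2,3$ respectively (the value $\tfrac43$ of $D_4$ landing on the excluded right endpoint). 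This gives $s+2t+3w\le10$, hence $w\ge4$ and $\sum_i\mu_i\ge4w\ge16>14$, the desired contradiction. Thus the main difficulty is precisely the sharp, quasi-homogeneous treatment of this single pair; for all other $(n,d)$ the band is narrow enough, and the spectrum of $f_{n,d}$ spread out enough, that the crude two-window estimate already wins.
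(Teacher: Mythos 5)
Your overall strategy --- deform $V\setminus H$ to the cone $f_{n,d}$ and play the semicontinuity of the spectrum against the deficit $\sum_i\mu_i=(d-1)^n-2$ --- is exactly the paper's, and your ``two-window'' reduction is in substance the paper's first lemma: with the uniform bound that all admissible curve spectral numbers exceed $-\frac23$ one gets $\frac{n+1}{d}-1>\frac{n-2}{2}-\frac23$, which for $n\ge 4$ leaves precisely $d=3$ and $n\in\{4,5\}$. The gap is in the step you use to go further, namely the claim that the degree of the hypersurface bounds the singularity types, so that $\ell_d=-\frac23+\varepsilon_d$ with $\varepsilon_d\sim 1/d$ and in particular ``only $A_1,A_2,D_4$ occur on a cubic''. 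This is false: cubic surfaces carry $A_k$ up to $A_5$, $D_5$ and $E_6$ singularities (the first entry of Huh's own polar-degree-$2$ list is a cubic surface with an $E_6$ point), and nothing in Huh's Lemma 19 excludes $J_{k,0}$ with $6k-2\le (d-1)^n-2$ from a cubic threefold or fourfold. For $(n,d)=(5,3)$ the total Milnor number is $30$, so $J_{k,0}$ with $k\le 5$ is a priori admissible; its smallest curve spectral number is $-\frac23+\frac1{3k}$, which for $k\ge 2$ is $\le-\frac12$, hence $c+\ell_3\le \frac32-\frac12=1$ and the second spectral number $1$ of $f_{5,3}$ does \emph{not} fall strictly below your band. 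Your window count therefore yields no contradiction for $(5,3)$, and this case cannot be disposed of by any lower bound on the first spectral number alone: the paper instead combines the symmetry of the spectrum about $\frac32$ with the two semicontinuity inequalities on $]1,2[$ and $]-\infty,1[$ to force the spectral number $1$ to occur with multiplicity at least $4$ among the local singularities, and then checks that only $J_{2,0}$ and $J_{4,0}$ carry that spectral number, with Milnor numbers too large for four copies to fit into $30$.

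The same false premise undermines your treatment of $(4,3)$: restricting to $A_1,A_2,D_4$ with $s+2t+4w=14$ ignores $A_3,\ldots$, $E_6$, $J_{2,0}$, and so on, all of which are a priori possible on a cubic threefold of polar degree $2$. Your open-interval count on $]\frac13,\frac43[$ is the right kind of estimate, but to close the argument you must verify it uniformly over every type in the table compatible with total Milnor number $14$; the paper does this in one stroke by checking that every admissible curve germ $g$ satisfies $\deg_{]-\infty,-\frac13]}\mathrm{Sp}(g,0)\le\mu(g,0)/4$, whence $\sum_i\deg_{]\frac23,\frac53[}\mathrm{Sp}(f_i,p_i)\ge 14-\frac72>10=\deg_{]\frac23,\frac53[}\mathrm{Sp}(f_{4,3})$. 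So the skeleton of your proof matches the paper's, but both terminal cases as written rest on an incorrect classification of the singularities that a low-degree hypersurface can carry.
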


\begin{proof}
Suppose that there exists a hypersurface $V\subset \bP^n$ with isolated singularities
of degree $d$ and polar degree 2. By \cite[Lemma 19]{Huh} we know that the singularities of $V$ are of type $A,D,E,J_{*,*}$. It follows that the singularity $f_{n,d}$ deforms to those singularities in one fibre, whose Milnor numbers add up to $(d-1)^{n}-2$, which is just two less than the Milnor number of $f_{n,d}$.
The singularity $f_{n,d}$ has as its two smallest spectral numbers $-1+\frac{n}d$ with multiplicity 1, and $-1+\frac{n+1}d$ with multiplicity $n$.  

\begin{lemma}\label{l:1}
 Suppose $d\geq 3$ and $n\geq 3$.  Then either $n=3$ and $d\le 4$, or
 $d=3$ and $4\le n \le 5$. 
 \end{lemma}
\begin{proof} All singularities of type $A,D,E,J_{*,*}$ in dimension $n-1$ have a smallest spectral number which is greater than $-\frac 23+\frac{n-2}2=\frac {n-1}2-\frac 76$.  Suppose that the second spectral number $\frac{n+1}d-1$ of $f_{n,d}$  is smaller or equal to $t_{n}:= \frac{n-1}2-\frac 76$. As this spectral number has multiplicity equal to $n$, it follows that $\deg_{]-\infty,t_{n}]}\mathrm{Sp}(f_{n,d})\geq n+1$. Therefore:
\[ \deg_{]t_{n},\infty[}\mathrm{Sp}(f_{n,d}) < (d-1)^n-n-1<(d-1)^n-2=\sum_i \deg_{]t_{n},\infty[}\mathrm{Sp}(f_i,p_i)
\] 
since $n\ge 2$, and the last equality is due to our assumption $\pol(V) =2$. 
This contradicts the semicontinuity  of the spectrum \eqref{eq:2}.

So the second spectral number $\frac{n+1}d-1$ of $f_{n,d}$ must be greater than  $\frac {n-1}2-\frac 76$. But from $\frac{n+1}d-1> \frac {n-1}2-\frac 76$ it follows that  $(3(n-1)-1)(d-2)<14$. 

If $n=3$,  this implies $d\le 4$. If  $n\geq 4$, this implies  that $3(n-1)-1> 8$, so $d=3$ is the only solution, and therefore $3(n-1)-1< 14$, thus $n \leq 5$. 
\end{proof}

We continue the proof of Theorem \ref{t:pol2} with the remaining cases after the reduction by Lemma \ref{l:1}.

 \smallskip
\noindent
\emph{The case $n=5,d=3$. } 
The spectrum of $f_{5,3}$ is  $\left( \frac 23\right)  + 5\left( 1\right) +10\left( \frac 43\right) +10\left( \frac 53\right) +5\left( 2\right) +\left( \frac 73\right)$, therefore its degree with respect to the open interval $]1,2[$ is
 $20$.  For our cubic fourfold $V$ with finite singular set $\{p_1,\ldots,p_s\}$ and $\pol(V)=2$ we have $\sum_i\mu(V,p_i)= 2^{5}-2 =30$. 
 
The semicontinuity of the spectrum requires that $\sum_i \deg_{]1,2[}\mathrm{Sp}(f_i,p_i) \leq 20$,
thus using the symmetry of the spectrum  with respect to $3/2$ we get:
\[ \sum_i \deg_{]-\infty,1]}\mathrm{Sp}(f_i,p_i) \geq \frac 12(30-20)=5.
\]
But the semicontinuity also gives:
 \[ \sum_i \deg_{]-\infty,1[}\mathrm{Sp}(f_i,p_i) \leq  \deg_{]-\infty,1[}\mathrm{Sp}(f_{5,3}) =1.\]
It follows that the spectral number $1$ in $\sum_i \mathrm{Sp}(f_i,p_i)$ occurs with multiplicity $n_1\geq 4$.  One checks easily that there do not exist singularities in our list satisfying $\sum_i \mu(f_i,p_i)\leq 30$ and $n_1\geq 4$.  Indeed, the only singularities from our list which have a spectral number equal to 1 are $J_{2,0}$ and $J_{4,0}$, with Milnor numbers 10 and 22, respectively,  thus the total multiplicity $n_{1}$ cannot be more than 3.

\medskip

\noindent
\emph{The case $n=4$, $d=3$.} 
The spectrum of $f_{4,3}$ is $ \left(\frac 13\right) +4\left(\frac 23\right) +6\left( 1\right) +4\left(\frac 43\right) +\left(\frac 53\right)$.
For all curve singularity germs $g$ in our table one checks that their smallest spectral number is greater than $-\frac 23$ and 
\[ \deg_{]-\infty,-\frac 13]}\mathrm{Sp}(g,0)\leq \mu(g,0)/4.\]
Hence for a cubic threefold $V$ of polar degree $2$ with singular set $\{p_1,\ldots,p_s\}$, considering the shift of $+1$ of the spectral numbers,  we have: 
\[\sum_i \deg_{]-\infty,\frac 23]}\mathrm{Sp}(f_i,p_{i})\leq \sum_i\mu(f_i, p_{i})/4=\frac{7}{2}.\] 
Using that the smallest spectral number of each possible singularity $(f_i,p_{i})$ is greater than $\frac 13$, and hence by symmetry the greatest spectral number  is smaller than $\frac 53$, we then get:
\[\sum_i \deg_{]\frac 23,\frac 53[}\mathrm{Sp}(f_i, p_{i})=\sum_i \deg_{]\frac 23,\infty[}\mathrm{Sp}(f_i,p_{i})\geq 14-\frac 72>10\]
whereas $\deg_{]\frac 23,\frac 53[}\mathrm{Sp}(f_{4,3})=10$. This contradicts the semicontinuity of the spectrum. 
 
\end{proof}

\subsection{Proof of Theorem \ref{t:mainPol2}}
By the above Theorem \ref{t:pol2} and Lemma \ref{l:1}   we have reduced the proof of Theorem \ref{t:mainPol2} to the cases $n\le 3$ and $d\le 4$.
The projective surfaces of degree $d=3$ have been classified by  Bruce and Wall \cite{BW} and, as noticed by Huh \cite[proof of Prop. 21]{Huh}, the case $\pol(V) =2$ can be extracted  and yields a part of Huh's list. 

The proof for quartic surfaces is also due to Huh \cite[proof of Prop. 21]{Huh}, it is nontrivial and it  uses a whole bunch of classical results.  An alternate proof can be made by using semicontinuity. 

 The case $n=2$ of plane curves with $\pol(V) = d$ has been treated in \cite{FM}, as we have mentioned in the Introduction, and yields the corresponding part of Huh's list.  
 
 Let us remark also that the case $d=2$ and general $n$ is excluded from the list since $\pol(V)$ can be at most 1, 
by formula \eqref{eq:polmu}.

This completes the proof of our Theorem \ref{t:mainPol2}.

\section{Huh's general conjecture: proof of Theorem \ref{t:Huh-compact}}\label{s:general}

For polar degree $k>2$ we are able to find 
bounds for the dimension $n-1$ and degree $d$ of $V$. Our Theorem \ref{t:Huh-compact} answers Huh's Conjecture \ref{c:huh2}  in the more concrete terms of Theorem \ref{t:h0} and \ref{t:Huh2} below.

\subsection{Corank and spectrum}

\begin{theorem}\label{t:h0} 
Let $k\ge 3$. There is no projective hypersurface $V\subset \bP^{n}$ of polar
 degree $k$ with only isolated singular points, for  $n\geq  \max\{ k,5 + 3\log_{2}k\}$   and degree $d\ge 3$. 
\end{theorem}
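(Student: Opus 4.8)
The plan is to mimic the structure of the proof of Lemma~\ref{l:1} (the case $\pol(V)=2$), but now working with a general polar degree $k$, so that the constraint $\sum_i \mu(f_i,p_i)=(d-1)^n-k$ forces the direct sum of the local Milnor lattices to fill up all but $k$ of the $(d-1)^n$ spectral numbers of $f_{n,d}$. The key observation is that every admissible singularity from Huh's list (the $A,D,E,J_{*,*}$ table) has \emph{corank at most $2$}: its local equation involves only two essential variables $x_1,x_2$ together with the nondegenerate quadratic tail $q_{n-1-j}=x_{j+2}^2+\cdots+x_n^2$. By the Stability property of the spectrum, suspending by a square shifts every spectral number by $\tfrac12$, so each such singularity has all of its spectral numbers concentrated in a narrow band: its smallest spectral number is bounded below by something like $\tfrac{n-1}{2}-\tfrac76$ (the bound already used in Lemma~\ref{l:1}, coming from the worst case among the two-variable cores after $(n-2)$-fold suspension).

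First I would pin down, as in Lemma~\ref{l:1}, the two smallest spectral numbers of $f_{n,d}$, namely $-1+\tfrac{n}{d}$ with multiplicity $1$ and $-1+\tfrac{n+1}{d}$ with multiplicity $n$, and more generally count how many of the small spectral numbers of $f_{n,d}$ lie to the left of the threshold $t_n:=\tfrac{n-1}{2}-\tfrac76$. The point is that the number $u_{\mathrm{low}}$ of spectral numbers of $f_{n,d}$ below $t_n$ grows with $n$: these correspond to lattice points $(a_1,\dots,a_n)$ with $1\le a_j\le d-1$ and small coordinate sum, and for $d\ge 3$ one can bound this count from below by a quantity that is exponential in $n$ (roughly $2^{n-c}$ for a small constant $c$, coming from the $\binom{n}{j}$ ways of choosing which coordinates equal $1$ versus $2$). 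Since \emph{none} of the admissible local singularities contributes any spectral number below $t_n$, the semicontinuity inequality \eqref{eq:2} applied on the complementary range $]t_n,\infty[$ gives
\[
(d-1)^n-k=\sum_i\deg_{]t_n,\infty[}\mathrm{Sp}(f_i,p_i)\le \deg_{]t_n,\infty[}\mathrm{Sp}(f_{n,d})=(d-1)^n-u_{\mathrm{low}},
\]
hence $u_{\mathrm{low}}\le k$. So the entire argument reduces to showing that the count $u_{\mathrm{low}}$ of low spectral numbers of $f_{n,d}$ exceeds $k$ once $n\ge\max\{k,5+3\log_2 k\}$, yielding a contradiction and proving that no such $V$ exists.

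The decisive step is therefore the lower bound $u_{\mathrm{low}}>k$. I would estimate $u_{\mathrm{low}}$ by restricting to the sublattice of tuples whose coordinates are only $1$ or $2$ (legitimate since $d\ge 3$): a tuple with exactly $m$ entries equal to $2$ has coordinate sum $n+m$, giving spectral number $-1+\tfrac{n+m}{d}$, and the number of such tuples is $\binom{n}{m}$. One checks that the spectral number $-1+\tfrac{n+m}{d}$ stays below $t_n$ as long as $m$ is below a linear-in-$n$ cutoff, so $u_{\mathrm{low}}\ge\sum_{m=0}^{m_0}\binom{n}{m}$ for a suitable $m_0$; taking even the single middle-range binomial term already gives growth like $2^{n}/\mathrm{poly}(n)$, and more carefully one extracts the clean threshold $n\ge 5+3\log_2 k$ (together with the crude bound $n\ge k$ to cover small-$n$ regimes where the binomial estimate is weak). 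I expect the main obstacle to be precisely this combinatorial counting: one must choose the cutoff $m_0$ so that all the counted spectral numbers genuinely lie below $t_n$ \emph{and} so that the resulting binomial sum is provably larger than $k$, and getting the constants to land exactly on $5+3\log_2 k$ requires a careful, if elementary, comparison of $\sum_{m\le m_0}\binom{n}{m}$ with $2^n$ rather than a soft asymptotic argument. The variable-count reduction (corank $\le 2$ plus the suspension shift) is conceptually the heart, but it is already supplied by the earlier table and the Stability property; the quantitative work is all in the binomial estimate.
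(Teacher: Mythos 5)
Your argument has a genuine gap at its foundation. The premise that every local singularity of $V$ lies in the $A,D,E,J_{*,*}$ table and therefore has corank at most $2$ is only valid for polar degree $1$ or $2$: that table is Huh's classification for $\pol(V)\le 2$, and Theorem~\ref{t:h0} concerns arbitrary $k\ge 3$. For general $k$ the only constraint available is Huh's inequality $\mu^{\left<n-2\right>}(V,p_i)\le k$, and this permits singularities of corank up to roughly $1+\log_2 k$ (a corank-$r$ germ has Milnor number at least $2^r$, so the generic hyperplane section has corank at most $\log_2 k$ and the germ itself at most one more). A corank-$(r+1)$ core with trivial $2$-jet has spectrum only in $\left]-1,r\right[$ before suspension, so after adding $n-r-1$ squares the smallest spectral number is bounded below only by $\frac{n-r-3}{2}$, which for large corank is well below your threshold $t_n=\frac{n-1}{2}-\frac76$. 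Consequently your key inequality --- that none of the local singularities contributes a spectral number below $t_n$ --- fails, and the semicontinuity comparison on $\left]t_n,\infty\right[$ does not go through. This corank estimate is precisely the missing ingredient, and it is also the true source of the $\log_2 k$ in the statement; your proposal has no mechanism producing that dependence, since you attribute it instead to a binomial count.

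Relatedly, the quantitative strategy is misdirected. You do not need to show that the number of low spectral values of $f_{n,d}$ exceeds $k$ by an exponential binomial estimate: the first two spectral values of $f_{n,d}$ alone, namely $-1+\frac{n}{d}$ with multiplicity $1$ and $-1+\frac{n+1}{d}$ with multiplicity $n$, already give $n+1$ spectral numbers below the (corrected) threshold $\beta=\min_i\frac{n-r_i-3}{2}$ whenever $-1+\frac{n+1}{d}<\beta$, and semicontinuity then yields $(d-1)^n-k\le (d-1)^n-n-1$, i.e.\ $n+1\le k$, contradicting $n\ge k$. That is where the hypothesis $n\ge k$ enters --- it is not a crude patch for small $n$. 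The remaining alternative $-1+\frac{n+1}{d}\ge\beta\ge\frac{n-\log_2 k-3}{2}$ forces $d\le\frac{2(n+1)}{n-1-\log_2 k}<3$ once $n>5+3\log_2 k$, which is where that threshold actually comes from. So the two branches of the $\max$ arise from two different steps of the same short argument, not from optimizing a lattice-point count.
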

\begin{proof}
Let $\{p_1,\ldots,p_s\}$ be the set of singular points of a projective hypersurface $V$ of $\mathbb{P}^n$ of polar degree $k\geq 3$. 
By \cite[Theorem 2]{Huh}, one has  $k\ge \mu^{(n-2)}(V,p_{i})$. This is the Milnor number of the slice germ $(V\cap H,p_{i})$ for some general hyperplane $H$ through $p_{i}$. 
If $f_{i}$ is a local equation of  $V$ at $p_{i}$, then the restriction  ${f_{i}}_{|H}$ is a local equation 
for $(V\cap H,p_{i})$. 

Let us denote $r_{i}:= \corank {f_{i}}_{|H}$, where $0\le r_{i}\le n-1$. 
We first show:
\begin{lemma}\label{l:corankbound}
For any $i\in \{1, \ldots , s\}$, we have:
\begin{enumerate}
\rm \item \it $r_{i}\leq \log_{2}k$.
\rm \item \it  The smallest spectral number $\alpha_{1,i}$ of the hypersurface singularity $(V,p_{i})$ is bounded from below by 
 $\frac{n-r_{i}-3}{2}$.
 \end{enumerate}
\end{lemma} 
\begin{proof}
We shall use the following two standard facts concerning the corank. Let $h:(\bC^{m},0)\to (\bC,0)$ be some function germ with isolated singularity. 

\noindent
(1).  If $r = \corank h$   then $h =  g(x_{1},  \ldots , x_{r})  + x^{2}_{r+1}+\cdots + x^{2}_{m}$ with $\ord_{0} g \ge 3$  and therefore (by using the Sebastiani-Thom formula) we get  $\mu(h) = \mu( g) \ge \mu(x^{3}_{1}+\cdots + x^{3}_{r}) = 2^{r}$.
  
\noindent
(2). If the restriction $h_{|\{x_{1}=0\}}$ has isolated singularity and has corank $p$,  then $\corank h \le p+1$. Indeed one has $h_{|\{x_{1}=0\}} = \hat h(x_{2}, \ldots , x_{p+1}) + x^{2}_{p+2}+\cdots + x^{2}_{m}$ and thus \\
$h = x_{1}h_{1}(x_{1}, \ldots , x_{p+1}) + x_{1}h_{2}(x_{p+2}, \ldots, x_{m})  +   x^{2}_{p+2}+\cdots + x^{2}_{m}$ is right-equivalent, modulo $\frak{m}^{3}$, to  $x_{1}\tilde h_{1}(x_{1}, \ldots , x_{p+1})  +   x^{2}_{p+2}+\cdots + x^{2}_{m}$  where    $x_{1}\tilde h_{1}(x_{1}, \ldots , x_{p+1}) $ is either of order $\ge 3$ or it is the sum of $x_{1}^{2}$ and some function of order $\ge 3$.

\medskip

\noindent
 (a).  Using fact (1),  we get that the Milnor number  $\mu({f_{i}}_{|H}, p_{i})$ is bounded from below by $2^{r_{i}}$. 
 Then, from the above inequalities,  we get:
\[  r_{i}\le \log_{2}\mu^{(n-2)}(V,p_{i})\leq \log_{2}k .
\]

\noindent
(b). By fact (2),  after slicing with any hyperplane such that the local singularity is still an isolated singularity of the function restricted to the slice, the corank can drop by at most one. We thus have:
  \[ \corank f_{i} \le r_{i}+1 \le 1+ \log_{2}\mu^{(n-2)}(V,p_{i})\leq 1+ \log_{2}k .
\]
  Therefore 
we may write in some well chosen system of local coordinates:
\[ f_{i}(x_{1}, \ldots, x_{n}) = g(x_{1}, \ldots, x_{r_{i}+1}) + x_{r_{i}+2}^{2} + \cdots + x_{n}^{2},
\] 
where $g\in m^{3}$ is a germ of a function in $r_{i}+1$ variables with trivial $2$-jet.  By using the properties of the spectrum, we get that the spectrum of $g$ is in the interval $]-1, r_{i}[$ and that by adding $n-r_{i}-1$ squares  to $g$, since the spectrum shifts by  $+\frac12$ for each square, the spectrum of $f_{i}$ is in the interval:
\[   \left]-1 +\frac{n-r_{i}-1}{2}, r_{i} +\frac{n-r_{i}-1}{2}\right[ .
\] 
and we have  $n-r_{i}-1 >0$ for any $i$, since $r_{i}\leq \log_{2}k \le \log_{2}n < n-1$ by our hypotheses $n\ge k\ge 3$. 
 \end{proof}

 If the second spectral number of $f_{n,d}$, which is $\frac{n+1}d-1$ and has multiplicity $n$, is smaller than $\beta := \min_{i}\frac{n-r_{i}-3}{2}$, then:
\[ \deg_{[\beta,\infty[}\mathrm{Sp}(f_{n,d}) \leq (d-1)^n -n-1.\]
On the other hand we have:
\[\deg_{[\beta,\infty[}\sum_i\mathrm{Sp}(f_{i},p_i) =\sum_i\mu(f_{i,} p_{i})=(d-1)^n-k.\]
By the semicontinuity of the spectrum we get:
\[(d-1)^n-k\leq (d-1)^n-n-1, \]
or $n+1\leq k$, which contradicts the hypothesis $n\geq k$. 

We therefore must have $-1 +\frac{n+1}d\geq \beta$, and since $r_{i}\le \log_{2}k$ for any $i$, we get $\beta\ge \frac{n-\log_{2}k-3}{2}$. This amounts to the inequality: 
\[d\leq \frac{2(n+1)}{n-1-\log_{2}k}. \]
Since the last fraction is less than $3$ if $n>5 + 3\log_{2}k$, our proof of Theorem \ref{t:h0} is complete.
\end{proof}


\subsection{An upper bound for the degree $d$}
 The first spectral number of $g$ at the point $x$ will be denoted by $\alpha_{1}(g,x)$. We continue to use the notation $f_{n,d}$ for a generic homogeneous polynomial of degree $d$ in $n$ variables, which is deformation-equivalent to $\sum_{i=1}^{n} x_i^d$.

\begin{lemma}\label{l:ineq}
Let $\pol(V) = k\ge 2$, where $V = \{ f=0\} \subset \bP^{n}$ has isolated singularities. Then:
\[ \alpha_{1}(f,p)  > -1 + \frac{n-1}{k+2}
\] 
for any $p\in \Sing V := \{p_{1}, \ldots , p_{s}\}$.
\end{lemma}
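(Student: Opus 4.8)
The plan is to bound the first spectral number $\alpha_1(f,p)$ from below by relating the local singularity at $p$ to the generic singularity $f_{n,d}$ via the semicontinuity inequality \eqref{eq:2}, and to convert the polar-degree hypothesis $\pol(V)=k$ into information about the total Milnor number $\sum_i\mu(f_i,p_i)=(d-1)^n-k$ through formula \eqref{eq:polmu}. First I would fix a singular point $p=p_i$ and suppose, for contradiction, that its smallest spectral number satisfies $\alpha_{1}(f,p)\le -1+\frac{n-1}{k+2}$. The strategy is then to compare the amount of ``spectral mass'' of the deformed fibres lying at or below some cutoff value $a$ against the spectral mass of $f_{n,d}$ below $a$, and to derive a numerical contradiction exactly as in the proof of Theorem \ref{t:pol2} and Lemma \ref{l:1}.

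The key mechanism is the symmetry of the spectrum about $(n-2)/2$ together with the two smallest spectral numbers of $f_{n,d}$, namely $-1+\frac nd$ (multiplicity $1$) and $-1+\frac{n+1}d$ (multiplicity $n$), already recorded in Section \ref{ss:gen} and in the proof of Theorem \ref{t:pol2}. I would choose the cutoff $a$ to be slightly above the assumed value of $\alpha_1(f,p)$, so that the semicontinuity inequality \eqref{eq:2} forces $\deg_{]-\infty,a]}\sum_i\mathrm{Sp}(f_i,p_i)\le \deg_{]-\infty,a]}\mathrm{Sp}(f_{n,d})$. The point is that if $\alpha_1(f,p)$ is too small, the left side is forced to be positive (it contains at least the contribution of $p$), while the right side can be made to vanish by pushing the cutoff below $-1+\frac nd$. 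Concretely, the inequality $-1+\frac{n-1}{k+2}< -1+\frac nd$ is controlled by the degree bound $d<(n+\ell_{n,k})(k+2)/(n-1)$ that Theorem \ref{t:Huh2} will ultimately produce, so Lemma \ref{l:ineq} is the natural intermediate step feeding that theorem: it says the spectrum of every local singularity sits strictly to the right of $-1+\frac{n-1}{k+2}$, which is precisely what one needs to bound $d$ from above.

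The cleanest route is probably to argue by the contrapositive on the cumulative spectrum. I would set $a:=-1+\frac{n-1}{k+2}$ and use semicontinuity in the form $\sum_i\deg_{]-\infty,a]}\mathrm{Sp}(f_i,p_i)\le \deg_{]-\infty,a]}\mathrm{Sp}(f_{n,d})$. If $\alpha_1(f,p)\le a$ for some $p$, the left-hand side is at least $1$; so it suffices to show the right-hand side is $0$, i.e. that $f_{n,d}$ has no spectral number $\le a$, equivalently $-1+\frac nd > a=-1+\frac{n-1}{k+2}$, i.e. $\frac nd>\frac{n-1}{k+2}$. This last inequality is exactly where the hypothesis $\pol(V)=k$, hence the constraint on $d$ relative to $n$ and $k$, must enter, and I expect this to be the delicate quantitative point: one has to verify that the range of admissible $(n,d)$ (for which a hypersurface of polar degree $k$ exists at all) is compatible with $d< \frac{n(k+2)}{n-1}$, or else to run the semicontinuity argument with the \emph{second} spectral number $-1+\frac{n+1}d$ of multiplicity $n$ to absorb a slightly weaker degree constraint.

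The main obstacle I anticipate is matching the cutoff $a$ precisely to the spectrum of $f_{n,d}$ so that the comparison is strict: the half-open versus open interval distinction in \eqref{eq:1} versus \eqref{eq:2} matters at the boundary, and one must be careful that the symmetry reflection $\alpha\mapsto n-2-\alpha$ is applied to the correct interval. A secondary subtlety is that the lemma claims a strict inequality $\alpha_1(f,p)>-1+\frac{n-1}{k+2}$, so the contradiction has to be set up so that equality $\alpha_1(f,p)=-1+\frac{n-1}{k+2}$ is also excluded; this is handled by choosing the cutoff interval half-open on the correct side and invoking \eqref{eq:2}, which uses closed intervals $]a,a+1]$, rather than \eqref{eq:1}. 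Once the comparison is arranged so that any spectral number of $\sum_i\mathrm{Sp}(f_i,p_i)$ at or below the cutoff would exceed the total mass $\deg_{]-\infty,a]}\mathrm{Sp}(f_{n,d})$ permitted by semicontinuity, the strict inequality for $\alpha_1(f,p)$ follows immediately, uniformly over all $p\in\Sing V$.
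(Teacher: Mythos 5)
Your proposal replaces the paper's local argument with a global one, and the global route has a genuine gap that you yourself flag but do not close: it is circular. You reduce the lemma to the statement $\deg_{]-\infty,a]}\mathrm{Sp}(f_{n,d})=0$ with $a=-1+\frac{n-1}{k+2}$, which holds if and only if $d<\frac{n(k+2)}{n-1}$. But the lemma carries no hypothesis on $d$, and its whole purpose is to \emph{produce} the upper bound on $d$ in Theorem \ref{t:Huh2}; assuming a bound of that very shape as input is circular. For $d\ge \frac{n(k+2)}{n-1}$ the germ $f_{n,d}$ does have spectral numbers $\le a$ (and $\deg_{]-\infty,a]}\mathrm{Sp}(f_{n,d})$ grows with $d$), so the semicontinuity inequality \eqref{eq:2} only bounds $\sum_i\deg_{]-\infty,a]}\mathrm{Sp}(f_i,p_i)$ by a positive quantity and yields no contradiction. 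A second, independent defect is that your only use of $\pol(V)=k$ is through the global count $\sum_i\mu(f_i,p_i)=(d-1)^n-k$; this controls total spectral mass but cannot force \emph{each} local $\alpha_1(f,p_i)$ above the threshold, which is what the lemma asserts pointwise.

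The missing idea is Huh's local inequality $\mu^{\left< n-2\right>}(V,p_i)\le k$, which is how the paper actually converts $\pol(V)=k$ into information at a single singular point. From it, the slice germ $f_{i|H}$ has Milnor number at most $k$, hence is $(k+1)$-determined and deformation-equivalent to its $(k+1)$-jet $g_i$; the family $sg_i+(1-s)f_{n-1,k+2}$ then exhibits $g_i$ inside a deformation of the generic degree-$(k+2)$ germ in $n-1$ variables, and \emph{local} semicontinuity gives $\alpha_1(f_{i|H},p_i)\ge -1+\frac{n-1}{k+2}$. Lemma \ref{l:h2} upgrades this to the strict inequality for $\alpha_1(f_i,p_i)$ after un-slicing. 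Note that the comparison germ is $f_{n-1,k+2}$ (degree $k+2$ in $n-1$ variables), not $f_{n,d}$: this is precisely why the resulting bound is independent of $d$, something a comparison against $f_{n,d}$ cannot achieve.
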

\begin{proof}
One has Huh's inequality  \cite[Theorem 2]{Huh}:  $\mu^{(n-2)}(V,p_{i})\leq k$, for any $i$. 
In the notations of the proof of Theorem \ref{t:h0}, this amounts to $\mu(f_{i|H}, p_{i}) \leq k$, where $H$ is some general hyperplane through $p_{i}$. We consider the composition of $f_{i|H}$ with the translate of the point $p_i$ at the origin and denote the result by  $\tilde f_{i|H}$.
 It follows that the function germ $\tilde f_{i|H}$ is $k+1$-determined, which means that it is deformation-equivalent 
 to its $(k+1)$-jet $g_{i}:= \j^{k+1}(\tilde f_{i|H})$, which is a polynomial of degree $k+1$. We then apply the spectrum semi-continuity to the deformation:
\[   h_{i} := s g_{i} +(1-s) f_{n-1, k+2}\]
and get $\alpha_{1}(f_{i|H}, p_{i}) = \alpha_{1}(g_{i}, 0) \geq \alpha_{1}(f_{n-1, k+2},0) = -1 + \frac{n-1}{k+2}$.
By the next Lemma \ref{l:h2} one has $\alpha_{1}(f_{i|H}, p_{i}) < \alpha_{1}(f_{i|}, p_{i})$, and our claim follows by chaining these inequalities.
\end{proof}

\begin{lemma}\label{l:h2} 
Let $h:(\mathbb{C}^n,0)\to (\mathbb{C},0)$ be an isolated hypersurface singularity with smallest spectral number $\alpha_1$. Let $H$ be a general hyperplane through $0$ in $\mathbb{C}^n$ and let $\alpha_1^\prime$ be the smallest spectral number of the restriction $h_{|H}$. Then $\alpha_1> \alpha^\prime_1$. 
 \end{lemma}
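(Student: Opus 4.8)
The plan is to recast the statement in terms of the \emph{minimal exponent} and to prove that it strictly drops under a general hyperplane section. Recall that the smallest spectral number of an isolated singularity is carried by the top Hodge piece $F^{n-1}$, and that the quantity $\tilde\alpha(h):=\alpha_1(h)+1$ is the minimal exponent: minus the largest root of the reduced Bernstein--Sato polynomial $b_h(s)/(s+1)$, or equivalently (Varchenko, Malgrange) the smallest exponent occurring in the asymptotic expansion at $t=0$ of the period integrals $\int_{\gamma(t)}\omega/\d h$ taken over holomorphic $n$-forms $\omega$ and vanishing cycles $\gamma(t)\in H_{n-1}(X_{h,t})$. Since $h_{|H}$ is a germ in $n-1$ variables, one has $\alpha_1(h_{|H})=\tilde\alpha(h_{|H})-1$, so the inequality $\alpha_1(h)>\alpha_1(h_{|H})$ to be proved is \emph{equivalent} to the strict drop of minimal exponents
\[ \tilde\alpha(h_{|H}) < \tilde\alpha(h). \]

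First I would choose coordinates with $H=\{x_n=0\}$; as $H$ is general, $h_{|H}$ has again an isolated singularity, and I may write the top form as $\omega=\d x_n\wedge\omega'$ with $\omega'=\d x_1\wedge\cdots\wedge\d x_{n-1}$ the top form on $H$. The mechanism is then a fibre-integration (Fubini--Mellin) comparison: the leading asymptotics of the $n$-dimensional period integrals of $\omega$ is recovered from the $(n-1)$-dimensional period integrals of $\omega'$ on the slices $\{x_n=\mathrm{const}\}$ by one further integration in the normal direction $x_n$, and this extra integration shifts the leading exponent upward. Equivalently, one may argue on the level of the $V$-filtration on the Brieskorn lattice / Gauss--Manin system: $\tilde\alpha$ is the smallest $V$-order of a section coming from a holomorphic top form, and restriction to $H$ produces a section of strictly smaller $V$-order. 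Either route delivers the monotonicity $\tilde\alpha(h_{|H})\le\tilde\alpha(h)$, which is the expected behaviour of the minimal exponent under general hyperplane sections (M.\ Saito; and the subsequent work on minimal exponents and $V$-filtrations).

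The main obstacle is \emph{strictness}. The non-strict inequality $\le$ is the robust part and amounts to monotonicity of $\tilde\alpha$ under restriction; what genuinely uses the hypotheses is the strict positivity of the gap. To secure it I would exploit the genericity of $H$ together with the fact that $h$ is a genuine singularity: a general $H$ is transverse to the characteristic directions of $h$, so the extra normal integration above contributes a strictly positive amount that cannot be absorbed, preventing $\tilde\alpha$ from remaining constant. The subtlety is that this positive shift may be arbitrarily small, so no crude uniform bound is available. This is visible already on the table of singularities: for $A_k$ in $n$ variables one has $\tilde\alpha(h)=\frac{n-1}{2}+\frac1{k+1}$ whereas the general section is $A_1$ in $n-1$ variables with $\tilde\alpha(h_{|H})=\frac{n-1}{2}$, giving a gap $\frac1{k+1}>0$ that tends to $0$ as $k\to\infty$. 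In particular the naive suspension $h_{|H}+x_n^2$, whose minimal exponent exceeds that of $h_{|H}$ by exactly $\tfrac12$, would suggest a fixed gap of $\tfrac12$; since $\tilde\alpha(h)$ can be strictly smaller than $\tilde\alpha(h_{|H})+\tfrac12$, the argument must genuinely locate the strict but possibly small positive shift coming from the normal direction, rather than rely on any suspension estimate.
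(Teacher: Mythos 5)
Your reduction to the strict drop of the minimal exponent, $\tilde\alpha(h_{|H})<\tilde\alpha(h)$, is a faithful reformulation, and the non-strict inequality $\tilde\alpha(h_{|H})\le\tilde\alpha(h)$ for a generic hyperplane is indeed the robust, citable part. But the strict inequality is the entire content of the lemma, and your proposal does not prove it. The sentence about $H$ being ``transverse to the characteristic directions of $h$'' so that the normal integration ``contributes a strictly positive amount that cannot be absorbed'' is a heuristic, not an argument: nothing in it quantifies the alleged positive contribution, explains why it cannot degenerate to zero, or even uses the hypothesis that the singularity is \emph{isolated}. That hypothesis is essential: for the non-isolated germ $h=x_1x_2$ on $\mathbb{C}^3$ one has $\tilde\alpha(h)=\tilde\alpha(h_{|H})=1$ for every general $H$, so equality under generic restriction does occur, and any correct proof of strictness must locate exactly where isolatedness enters. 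You correctly diagnose the difficulty --- the gap can be as small as $\frac1{k+1}$ for $A_k$, so no suspension-type bound by $\frac12$ can work --- but you stop at the diagnosis.

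The paper closes precisely this gap by an elementary degeneration that you came within one step of. Instead of the suspension $h_{|H}+z_1^2$ (which you rightly discard because it overshoots by $\frac12$), take $N\gg1$ with $h$ being $N$-determined, write $h=\sum_{m=0}^{N-1}z_1^m h_m(z_2,\ldots,z_n)+z_1^N$ with $h_0=h_{|H}$ and $H=\{z_1=0\}$, and deform via $h_t:=\sum_{m}(tz_1)^m h_m+z_1^N$. At $t=0$ this is the join $z_1^N+h_{|H}$, whose smallest spectral number is $\alpha_1'+\frac1N$ by Thom--Sebastiani; the semicontinuity of the spectrum along this deformation then gives $\alpha_1\ge\alpha_1'+\frac1N>\alpha_1'$. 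The exponent $N$ replaces your exponent $2$ and makes the forced shift $\frac1N$ small enough to be compatible with the true (possibly tiny) gap, while isolatedness enters through finite determinacy. To salvage your route you would need an actual proof that the $V$-order of the restricted section strictly decreases, which the monotonicity results you allude to do not supply.
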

\begin{proof} Since $h$ is finitely determined,  we may assume without loss of generality that $h$ is a polynomial, that $H$ is defined by $z_1=0$ and that 
\[ h(z_1,\ldots,z_n) = \sum_{m=0}^{N-1}z_1^mh_m(z_2,\ldots,z_n)+z_1^N\]
with $\deg h_m \le N-m$ and $N \gg 1$. Then by: 
\[ h_t(z_1,\ldots,z_n):= \sum_{m=0}^{N-1}(tz_1)^mh_m(z_2,\ldots,z_n)+z_1^N\]
we obtain $h$ as a deformation of the singularity $z_1^N+h_0(z_2,\ldots,z_n)$. By the Thom-Sebastiani theorem for the spectrum, this singularity has its smallest spectral number equal to $\alpha_1^\prime+\frac 1N$ and by the semicontinuity of the spectrum we get:  
$\alpha_1\geq \alpha_1^\prime+\frac 1N>\alpha_1^\prime$.
\end{proof}

\smallskip

Let $n,k\geq 2$ be fixed. Let $v_j$ denote the multiplicity of the $(j+1)$st spectral number $-1+\frac{n+j}d$ of the germ $f_{n,d}$,  for some $d\geq 2$. In particular $v_0=1$ and $v_1=n$. In general these numbers depend both on $n$ and $d$. However, for $j\leq d-2$ we have $v_j={n+j-1\choose n-1}$, see the general formula for the spectrum of $f_{n,d}$ in \S \ref{ss:gen}. We  define 
\begin{equation}\label{eq:ell}
  \ell_{n,k} := \min \left\{\ell \in \bN \ \middle| {n+\ell \choose n}>k\right\} .
  \end{equation}

  The  following estimation  of the  upper bound for the degree $d$ completes now the proof of Theorem \ref{t:Huh-compact}:

\begin{theorem}\label{t:Huh2}
Let  $n,k\geq 2$.  Any hypersurface $V\subset \bP^n$ with isolated singularities and $\pol(V) = k$ has degree  
\[ d < \max \{ 2+ \ell_{n,k},  (n+\ell_{n,k})(k+2)/(n-1)\}.\]
\end{theorem}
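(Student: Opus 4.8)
The plan is to argue by contradiction, assuming $d \ge \max\{2+\ell_{n,k},\, (n+\ell_{n,k})(k+2)/(n-1)\}$, which means that \emph{both} inequalities $d\ge 2+\ell_{n,k}$ and $d(n-1)\ge (n+\ell_{n,k})(k+2)$ hold simultaneously, and to derive a contradiction from the spectrum semicontinuity \eqref{eq:2}. Throughout I abbreviate $\alpha_0:=-1+\frac{n-1}{k+2}$ (the bound of Lemma \ref{l:ineq}), $N:=(d-1)^n=\mu(f_{n,d},0)$, and I write $S:=\mathrm{Sp}(f_{n,d},0)$ and $S':=\sum_i \mathrm{Sp}(f_i,p_i)$.

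First I would count the small spectral numbers of $f_{n,d}$ lying in $]-\infty,\alpha_0]$. The hypothesis $d\ge 2+\ell_{n,k}$, i.e.\ $\ell_{n,k}\le d-2$, is exactly what validates the multiplicity formula $v_j={n+j-1\choose n-1}$ of \S\ref{ss:gen} for every $j=0,\ldots,\ell_{n,k}$; summing by the hockey-stick identity, the first $\ell_{n,k}+1$ spectral numbers of $f_{n,d}$ carry total multiplicity $\sum_{j=0}^{\ell_{n,k}} v_j={n+\ell_{n,k}\choose n}$, which exceeds $k$ by the defining property \eqref{eq:ell} of $\ell_{n,k}$. The second hypothesis $d(n-1)\ge (n+\ell_{n,k})(k+2)$ rewrites as $-1+\frac{n+\ell_{n,k}}{d}\le \alpha_0$, and since the $(j+1)$st spectral number $-1+\frac{n+j}{d}$ increases with $j$, all of these first $\ell_{n,k}+1$ spectral numbers lie in $]-\infty,\alpha_0]$. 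Hence $\deg_{]-\infty,\alpha_0]}S\ge {n+\ell_{n,k}\choose n}>k$.

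Next I would record the confinement of the nearby spectra. By Lemma \ref{l:ineq} each local singularity $(V,p_i)$ has smallest spectral number $\alpha_1(f_i,p_i)>\alpha_0$, so all of $S'$ is supported in $]\alpha_0,\infty[$; with \eqref{eq:polmu} this gives $\deg_{]\alpha_0,\infty[}S'=\deg_{\bR}S'=\sum_i\mu(f_i,p_i)=N-k$. The crucial step is then to upgrade the unit-window inequality \eqref{eq:2} to the half-line $]\alpha_0,\infty[$: the half-open intervals $]\alpha_0+m,\alpha_0+m+1]$, $m\in\bN$, partition $]\alpha_0,\infty[$ and each is of the admissible type $]a,a+1]$, so summing \eqref{eq:2} over these (finitely many nonzero) windows yields $\deg_{]\alpha_0,\infty[}S'\le \deg_{]\alpha_0,\infty[}S$. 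Chaining the three facts gives $N-k=\deg_{]\alpha_0,\infty[}S'\le \deg_{]\alpha_0,\infty[}S=N-\deg_{]-\infty,\alpha_0]}S<N-k$, the desired contradiction.

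I expect the delicate point to be exactly this passage from unit-window to half-line semicontinuity, together with the bookkeeping that assigns each hypothesis its precise role: the bound $d\ge 2+\ell_{n,k}$ is used \emph{only} to legitimize the binomial multiplicities $v_j$ (without it the truncation $a_j\le d-1$ in \S\ref{ss:gen} strictly lowers the count), while $d\ge(n+\ell_{n,k})(k+2)/(n-1)$ is used \emph{only} to push the $(\ell_{n,k}+1)$st spectral number down to $\alpha_0$. I would also verify the endpoint conventions—that $]-\infty,\alpha_0]$ is taken closed so the spectral number sitting exactly at $\alpha_0$ is still counted, and that the strict inequality of Lemma \ref{l:ineq} really places the entire support of $S'$ strictly above $\alpha_0$—so that no mass is miscounted at the boundary.
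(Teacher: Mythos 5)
Your proposal is correct and follows essentially the same route as the paper: both use Lemma \ref{l:ineq} to confine $\sum_i \mathrm{Sp}(f_i,p_i)$ to $\left]-1+\frac{n-1}{k+2},\infty\right[$, the hypothesis $d\geq 2+\ell_{n,k}$ to certify $\sum_{j=0}^{\ell_{n,k}}v_j=\binom{n+\ell_{n,k}}{n}>k$, and semicontinuity on that half-line to force $(d-1)^n-k\leq (d-1)^n-\binom{n+\ell_{n,k}}{n}$, a contradiction. The only (welcome) addition is that you make explicit the passage from the unit-window inequality \eqref{eq:2} to the half-line by summing over the partition into half-open unit intervals, a step the paper leaves implicit.
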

\begin{proof}
In case  $d\geq 2 + \ell_{n,k}$, by the definition \eqref{eq:ell}, the expression for $v_{j}$ given above,
and by taking into account the identity $\sum_{j=0}^\ell {n+j-1\choose n-1} ={n+\ell \choose n}$ for any $\ell \ge 0$, we have: 
\begin{equation}\label{eq:ell2}
\sum_{j=0}^{\ell_{n,k}} v_{j}= {n+\ell_{n,k} \choose n} >k. 
  \end{equation}

Let us then denote $\gamma_{n,k}:= -1 + \frac{n-1}{k+2}$. We will show that $\gamma_{n,k} < -1 + \frac{n+\ell_{n,k}}{d}$, as follows. 

 Assume by contradiction that $\gamma_{n,k} \ge -1 + \frac{n+\ell_{n,k}}{d}$.
Since $V$ has singular points $p_1,\ldots,p_s$ with total Milnor number $\mu=\sum_{i=1}^s\mu_i$,  by Lemma \ref{l:ineq} we get: 
\[\deg_{]\gamma_{n,k},\infty[}\sum_i\mathrm{Sp}(f_{i},p_i) =\mu=(d-1)^n-k.\]

On the other hand, by our assumption we have:
\[ \deg_{]\gamma_{n,k},\infty[}\mathrm{Sp}(f_{n,d}) \leq (d-1)^n - (1 +  v_1 +\ldots + v_{\ell_{n,k}}).\]

Applying the semicontinuity of the spectrum we get the inequality:
\[ (d-1)^n-k \le (d-1)^n - (1 +  v_1 +\ldots + v_{\ell_{n,k}})\]
which contradicts \eqref{eq:ell2}.

 We have thus shown $\gamma_{n,k} < -1 + \frac{n+\ell_{n,k}}{d}$. This amounts to the inequality
 $d< (n+\ell_{n,k})(k+2)/(n-1)$, which concludes the proof of our theorem.
\end{proof}



\begin{thebibliography}{MMMM}
\footnotesize{


\bibitem[AGV]{AGV2} V.I. Arnol'd, S.M. Gusein-Zade, A.N. Varchenko, Singularities of Differential Maps, Volume II. Birkh\"auser, Boston-Basel-Stuttgart 1988.

\bibitem[Br]{Br}  
J.W. Bruce, \emph{An upper bound for the number of singularities on a projective hypersurface.} Bull. Lond. Math. Soc. 13 (1981), 47-50.

\bibitem[BW]{BW}  
J.W. Bruce, C.T.C. Wall, \emph{On the classification of cubic surfaces.}
J. London Math. Soc. (2) 19 (1979), no. 2, 245-256. 


\bibitem[Di]{Di2}
A. Dimca,
\emph{On polar Cremona transformations}.  An. \c Stiin\c t. Univ. Ovidius Constan\c ta Ser. Mat. 9 (2001), no. 1, 47-53. 


\bibitem[DP]{DP}
A. Dimca, S. Papadima, \emph{Hypersurface complements, Milnor fibers and higher homotopy groups of arrangements.} Annals of Mathematics  (2) 158 (2003), 473-507.

\bibitem[Do]{Do}
I. Dolgachev,    \emph{Polar Cremona transformations.}  Michigan Math. J. 48 (2000), 191-"202.
  

\bibitem[FM]{FM}
T. Fassarella, N. Medeiros,
\emph{On the polar degree of hypersurfaces.} J. Lond. Math. Soc. (2) 86 (2012), 259-271.

\bibitem[GN]{GN76} P. Gordan,  M. Noether, \emph{\" Uber die algebraischen Formen, deren Hesse'sche Determinante
identisch verschwindet}. Mathematische Annalen 10 (1876), 547-568.


\bibitem[Hes1]{Hes51} O. Hesse, \emph{\" Uber die Bedingung, unter welcher eine homogene ganze Function von $n$ unabh\" angigen Variabeln durch line\" are Substitutionen von n andern unabh\" angigen Variabeln auf
eine homogene Function sich zur\" uck-f\" uhren l\" a\ss t, die eine Variable weniger enth\" alt.} Journal
f\" ur die reine und angewandte Mathematik 42 (1851), 117-124.

\bibitem[Hes2]{Hes59} O. Hesse, \emph{Zur Theorie der ganzen homogenen Functionen}. Journal f\" ur die reine und angewandte Mathematik 56 (1859), 263-269.

\bibitem[Huh]{Huh}
J. Huh, \emph{Milnor numbers of projective hypersurfaces with isolated singularities.}
 Duke Math. J. 163 (2014), no. 8, 1525-1548.	
	
\bibitem[Si]{Si}
D. Siersma, Classification and Deformation of Singularities. Dissertation, Universiteit van Amsterdam, 1974.

\bibitem[St1]{St76}J.H.M. Steenbrink, \emph{Mixed Hodge structure on the vanishing cohomology.} In: Real and Complex singularities, Oslo 1976 (P. Holm ed.) pp. 525-563. Sijthoff and Noordhoff 1977.

\bibitem[St2]{St} J.H.M. Steenbrink,
\emph{Intersection form for quasi-homogeneous singularities.} Compositio Math. 34 (1977), no. 2, 211-223.


\bibitem[St3]{St85} J.H.M. Steenbrink, Semicontinuity of the singularity spectrum. Invent. math. 79 (1985), 557-565.

\bibitem[Ti]{Ti-book}
M. Tib\u ar, Polynomials and vanishing cycles. Cambridge Tracts in Mathematics 170, 
 Cambridge University Press 2007.

 
\bibitem[Va]{Va} A.N. Varchenko, \emph{Semicontinuity of the spectrum and an upper bound for the number of singular points of projective hypersurfaces.}  Dokl. Akad. Nauk SSSR 270 (1983), no. 6, 1294-1297. 

} 
\end{thebibliography}
\end{document}